\newcommand{\cols}{\calc}
\newtheorem{theorem}{Theorem}[section]
\newtheorem*{theorem*}{Theorem}
\newtheorem{lemma}[theorem]{Lemma}
\newtheorem*{lemma*}{Lemma}
\newtheorem{claim}[theorem]{Claim}
\newtheorem*{claim*}{Claim}
\newtheorem{proposition}[theorem]{Proposition}
\newtheorem*{proposition*}{Proposition}
\newtheorem*{corollary*}{Corollary}
\newtheorem{definition}[theorem]{Definition}
\newcommand{\prm}{^{\prime}}
\newcommand{\dprm}{^{\prime \prime}}
\DeclarePairedDelimiter\ceil{\lceil}{\rceil}
\newcommand{\abs}[1]{\left|#1\right|}
\newcommand{\nats}{\mathbb{N}}
\newcommand{\calc}{\mathcal{C}}
\newcommand{\expect}[1]{\mathbb{E}\left[#1\right]}
\newcommand{\tendsto}{\rightarrow}
\newcommand{\prob}[1]{\mathbb{P}\!\left[#1\right]}
\newcommand{\e}{{e}}
\newcommand{\Vflex}{V_{\text{flex}}}
\newcommand{\Cflex}{\cols_{\text{flex}}}
\newcommand{\calP}{\mathcal{P}}
\newcommand{\calH}{\mathcal{H}}
\newcommand{\calG}{\mathcal{G}}
\newcommand{\Vtr}{V_{\triangle}}
\newcommand{\Vsq}{V_{\square}}
\newcommand{\Vflx}{V_{\text{flex}\,}}
\newcommand{\Vbuf}{V_{\text{buf}\,}}
\newcommand{\colsflx}{\cols_{\text{flex}}}
\newcommand{\colsbuf}{\cols_{\text{buf}\,}}
\newcommand{\dnp}[2]{\mathbf{D}(#1,#2)}
\newcommand{\gnp}[2]{\mathbf{G}(#1,#2)}
\newcommand{\ddn}[2]{{D}_{#1,#2}}
\newcommand{\pathb}{\rho}
\newcommand{\gadgetb}{\nu}
\newcommand{\flxcnst}{\mu}
\newcommand{\coverub}{\zeta}
\newcommand{\eps}{\varepsilon}
\newcommand{\drgnp}[2]{\dir{\mathbf{G}}(#1,#2)}
\renewcommand{\drgnp}[2]{\mathbf{D}(#1,#2)}
\newcommand{\Habs}{H_{\text{abs}}}
\newcommand{\Np}{N^+}
\newcommand{\Nm}{N^-}
\newcommand{\dpp}{d^+}
\newenvironment{cframed}[1][blue]
  {\begin{tcolorbox}[colframe=#1,colback=white]}
  {\end{tcolorbox}}
\newcommand{\prblem}{\lambda}
\newcommand{\prbsqtr}{\prblem}
\newcommand{\prcon}{\prblem}
\newcommand{\Qabs}{Q_\text{abs}}
\newcommand{\mclb}{\gamma}
\newcommand{\goodmiddledges}{\mclb_3}
\newcommand{\remove}[1]{}
\title{Rainbow Hamiltonicity in uniformly coloured perturbed digraphs}
\author{
	Kyriakos Katsamaktsis \thanks{
	Department of Mathematics, 
	University College London, 
	Gower Street, London WC1E~6BT, UK. 
	Email: \texttt{kyriakos.katsamaktsis.21}@\texttt{ucl.ac.uk}.
    Research supported by the Engineering and Physical Sciences Research Council [grant number EP/W523835/1].
	}
	\and
	Shoham Letzter\thanks{
	Department of Mathematics, 
	University College London, 
	Gower Street, London WC1E~6BT, UK. 
	Email: \texttt{s.letzter}@\texttt{ucl.ac.uk}. 
	Research supported by the Royal Society.
	}
    \and
    Amedeo Sgueglia\thanks{
    Department of Mathematics, 
	University College London, 
	Gower Street, London WC1E~6BT, UK. 
	Email: \texttt{a.sgueglia}@\texttt{ucl.ac.uk}. 
	Research supported by the Royal Society.
    }
}
\begin{document}
\date{}
\maketitle

\begin{abstract}
\setlength{\parskip}{\medskipamount}
\setlength{\parindent}{0pt}
\noindent

    We investigate the existence of a rainbow Hamilton cycle in a uniformly edge-coloured randomly perturbed digraph. We show that for every \(\delta \in (0,1)\) there exists $C = C(\delta) > 0$ such that the following holds. Let $D_0$ be an $n$-vertex digraph with minimum semidegree at least $\delta n$ and suppose that each edge of the union of $D_0$ with a copy of the random digraph $\dnp{n}{C/n}$ on the same vertex set gets a colour in \([n]\) independently and uniformly at random. Then, with high probability, $D_0 \cup \dnp{n}{C/n}$ has a rainbow directed Hamilton cycle.
    
    This improves a result of Aigner-Horev and Hefetz (2021), who proved the same in the undirected setting when the edges are coloured uniformly in a set of $(1 + \eps)n$ colours.
\end{abstract}

\section{Introduction}

	Determining which minimum degree forces the containment of a given spanning subgraph is a central theme in extremal combinatorics.
	The prototypical example is  Dirac's theorem~\cite{dirac}, which 
	says that for an $n$-vertex graph $G$ the condition $\delta(G) \ge n/2$ is sufficient to guarantee that $G$ is Hamiltonian, and that this minimum degree condition is best possible.
	The question has been investigated in the setting of digraphs as well. Here a natural notion of minimum degree is
	the minimum \emph{semidegree}, denoted by $\delta^0(D)$, which is the minimum over all in- and out-degrees of the vertices of $D$.
	Ghouila-Houri~\cite{ghouila-houri} proved a directed analogue of Dirac's theorem, showing that, if $D$ is an $n$-vertex digraph with $\delta^0(D) \ge n/2$, then $D$ is Hamiltonian. 
	Here by a digraph $D$ being \emph{Hamiltonian} we mean that $D$ contains a \emph{directed Hamilton cycle}, i.e.\ a Hamilton cycle with all edges oriented consistently.

	On the other hand, one of the main pursuits in probabilistic combinatorics is understanding the minimum \(p\) such that
	\(\gnp{n}{p}\), the binomial random graph on $[n]$ with edge probability \(p\), contains a given subgraph 
	\emph{with high probability}\footnote{
	We say that a sequence of events \((A_n)_{n \in \nats}\) holds \emph{with high probability} if \(\prob{A_n} \tendsto 1\) as \(n\tendsto \infty\).
	}.
	Following the breakthrough of P\'osa~\cite{posa}, 
	it was proven~\cite{komlos83,korshunov1977}
	that, with high probability,
	$\gnp{n}{p}$ is Hamiltonian if it has minimum degree at least $2$, implying that $\log n / n$ is a \emph{sharp threshold} for Hamiltonicity. Here $\hat{p} = \hat{p}(n)$ is said to be a \emph{sharp threshold} for a graph property $\calP$ if, for every $\eps > 0$, with high probability the following holds.
	\begin{equation*}
		\begin{array}{ll}
			G(n,p) \notin \calP & \text{ if } p \le (1-\eps)\hat{p}, \\
			G(n,p) \in \calP & \text{ if } p \ge (1+\eps)\hat{p}. 
		\end{array}
	\end{equation*}
	The directed analogue of \(\gnp{n}{p}\), which we call the \emph{random directed graph} and denote by \(\dnp{n}{p}\), is the random digraph on $[n]$ where each ordered pair of distinct vertices forms a directed edge independently with probability \(p\).
	McDiarmid showed~\cite{mcdiarmid} that the threshold for Hamiltonicity in \(\dnp{n}{p}\) is at most that of \(\gnp{n}{p}\), and Frieze~\cite{frieze-dnp-hc} proved a directed analogue of the above result about Hamiltonicity of random graphs with minimum degree at least $2$, from which it follows that $\log n / n$ is a sharp threshold for Hamiltonicity for directed random graphs as well.
	Recently, Montgomery~\cite{mont-dnp-hc}
	determined sharp thresholds for all possible orientations of a Hamilton cycle in $\dnp{n}{p}$.

	As an interpolation between the deterministic and probabilistic models,
	Bohman, Frieze and Martin~\cite{bohman} introduced the  \emph{perturbed} model for graphs and digraphs.
	Given a fixed $\delta > 0$,
    let \(D_0\) be a digraph on vertex set \([n]\) with minimum semidegree at least \(\delta n\).
    The perturbed digraph is \(D_0 \cup \dnp{n}{p}\), i.e.\ it is the union of a digraph on vertex set $[n]$ with minimum semidegree at least $\delta n$, and the random digraph $\dnp{n}{p}$ on the same vertex set.
	The perturbed graph model is defined similarly as $G_0 \cup \gnp{n}{p}$, where $G_0$ is an $n$-vertex graph with minimum degree at least $\delta n$.
	In~\cite{bohman} they proved that
	there exists \(C>0\), depending only on \(\delta\), such that for each $n$-vertex digraph \(D_0\) with minimum semidegree at least \(\delta n\),
	the perturbed digraph \( D_0 \cup \dnp{n}{C/n}\) has 
	with high probability a directed Hamilton cycle.
	That is, for every digraph with linear minimum semidegree, adding linearly many random edges results in a graph that with high probability contains a directed Hamilton cycle.
	Up to the dependence on $\delta$ and $C$, this is best possible for all $\delta \in (0, 1/2)$, since the complete bipartite digraph with parts of size \(\delta n\) and \((1 - \delta)n\), with each edge appearing with both orientations, requires \(\Omega(n)\) edges to be Hamiltonian. (When $\delta \ge 1/2$ no random edges are needed, due to Ghouila-Houri's theorem.)
	This was generalised very recently by Araujo, Balogh, Krueger, Piga and Treglown~\cite{simon-all-orientations}, who showed that with high probability the same hypotheses ensure that the perturbed digraph contains every orientation of a cycle of every possible length, simultaneously.

	In this paper we consider a rainbow variant of the theorem of Bohman, Frieze and Martin.
    A subset of the edges of an edge-coloured graph or digraph is called \emph{rainbow} if no two edges share a colour, and a subgraph  is called rainbow if  its edge set is rainbow.
	For a finite set of colours \(\cols\),  
	a graph or digraph is \emph{uniformly coloured} in \(\cols\) if each edge gets a colour in \(\cols\) independently and uniformly at random.
	The problem of finding rainbow subgraphs of uniformly coloured graphs is well studied, in particular for \(\gnp{n}{p}\) \cite{cooper-frieze,ferber-kriv,frieze-loh,ferber-optimal-colours,frieze-bal-optimal-colours}.
	The analogous problem in perturbed graphs was considered only more recently~\cite{rainbow-perturbed-trees, elad-hefetz,anastos-first}.
	In particular, the problem of containing a rainbow Hamilton cycle
	was first addressed by
	Anastos and Frieze~\cite{anastos-first}, who showed that if the number of colours is at least about \(120 n\), then \(G\sim G_0 \cup \gnp{n}{C/n}\) has a rainbow Hamilton cycle with high probability, for suitable \(C\) depending only on \(\delta\).
	Later, Aigner-Horev and Hefetz~\cite{elad-hefetz} improved this result by showing that, at the same edge probability in the random graph,
	 \((1+\eps)n\) colours suffice for every $\eps > 0$ (where $C$ now depends also on $\eps$).
	We prove that the optimal number of colours suffices.

	\begin{theorem}\label{undirected-main-thm}
		For any \(\delta \in (0,1/2)\) there exists \(C>0\) such that 
		the following holds.
		Let \(G_0\) be an $n$-vertex graph with minimum degree at least $\delta n$ and let \(G \sim G_0\cup \gnp{n}{C/n}\) be uniformly coloured in \([n]\).
		Then with high probability \(G\) contains a rainbow  Hamilton cycle.
	\end{theorem}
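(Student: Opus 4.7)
The plan is to use an absorption strategy calibrated to the tight colour budget: since the target Hamilton cycle has $n$ edges and exactly $n$ colours are available, every colour must appear on it exactly once. I would construct a sublinear rainbow \emph{absorbing structure} on a vertex subset $\Vabs$ using a reserved colour subset $\Cabs \subseteq [n]$; build a long rainbow path on the remaining vertices using colours from $[n] \setminus \Cabs$; and finally absorb the few leftover vertices into the absorber to close into a Hamilton cycle. This statement is the undirected analogue of the directed main result of the paper, and should follow by essentially the same argument (or by a reduction that orients $G_0$ to have minimum semidegree $\ge (\delta/2)n$ and couples $\gnp{n}{C/n}$ with $\dnp{n}{C'/n}$ by random orientation of each edge, then appeals to the directed theorem).

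First I would set up the colouring profile: since $\gnp{n}{C/n}$ has $\Theta(n)$ edges coloured uniformly in $[n]$, a Poisson-style calculation shows that with high probability $(1-o(1))n$ colours appear on at least one random edge, no colour appears too often, and for every linear-sized vertex set $X$ and a positive fraction of colours $c$, some random edge of colour $c$ is incident with $X$. Then I would construct the absorber following the R\"odl--Ruci\'nski--Szemer\'edi template adapted to the rainbow setting: for each vertex $v$, build a constant number of \emph{rainbow absorbing gadgets} (for instance, two internally vertex-disjoint rainbow paths, one through $v$ and one avoiding $v$, sharing the same colour multiset) that can toggle the inclusion of $v$. The minimum degree $\delta n$ of $G_0$ together with the random edges should yield $\Omega(n)$ such gadgets per vertex, so a random selection plus concentration gives a sublinear $\Vabs$ with enough flexibility to absorb any $o(n)$ leftover set. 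With $\Vabs$ fixed, a rainbow P\'osa rotation-extension or depth-first-search argument, using $G_0$ for rotations and the remaining random edges and colours as boosters, extends the absorber into a rainbow path covering all but a sublinear set of vertices, which the absorber then swallows to close the cycle.

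The principal difficulty is the \textbf{tight colour count}. Under the $(1+\eps)n$-colour hypothesis of Aigner-Horev and Hefetz there is slack to discard $\eps n$ awkward colours; with exactly $n$ colours every colour used in the absorber is forbidden on the main path, so the colour multisets of the two pieces must dovetail exactly. The crux is therefore a \emph{colour-flexibility gadget}: a local rainbow configuration whose edges can be re-coloured (by a short swap through alternative random edges of a desired colour) so that the absorber can be tuned to use precisely the colour set complementary to that of the path. Establishing the existence in profusion of such gadgets in $G_0 \cup \gnp{n}{C/n}$, and carrying out all the necessary colour swaps simultaneously without creating conflicts, is where the main work of the proof lies.
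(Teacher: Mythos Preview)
Your parenthetical idea of reducing to the directed theorem is precisely the route the paper takes, but your one-line version misses the real obstruction. Randomly orienting $G_0$ is fine for the semidegree, and randomly orienting $\gnp{n}{C/n}$ can be coupled inside $\dnp{n}{C'/n}$; the genuine issue is the \emph{colouring}. In the undirected model an edge $\{x,y\}$ carries a single colour, so if you view it as the two arcs $xy,yx$ they are forced to have the \emph{same} colour, whereas the directed theorem assumes the colours of $xy$ and $yx$ are independent uniform. One must therefore show that ``both orientations present with the same colour'' is at least as good for rainbow Hamiltonicity as ``each orientation present with an independent colour''. The paper does this by a McDiarmid-style edge-by-edge coupling: it defines coloured digraphs $\Gamma_0,\dots,\Gamma_N$ interpolating between the two models and checks, for each edge $e_i$, that $\Pr[\Gamma_{i-1}\text{ has a rainbow HC}]\ge\Pr[\Gamma_i\text{ has a rainbow HC}]$ by conditioning on everything except $e_i$ and comparing the two ways of revealing $e_i$. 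This is short but not automatic; your sentence ``couples $\gnp{n}{C/n}$ with $\dnp{n}{C'/n}$ by random orientation'' does not address it.

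Your main proposal, a direct absorption argument in the undirected graph, is a legitimate alternative (the paper explicitly remarks that the first two authors did exactly this in an earlier version), but your sketch departs from what actually works in two places. First, the R\"odl--Ruci\'nski--Szemer\'edi template of many gadgets per vertex plus random selection does not give the exact bookkeeping you need when the colour budget is tight; the paper (following Montgomery, and Gould--Kelly--K\"uhn--Osthus for the rainbow adaptation) instead uses a \emph{robustly matchable bipartite graph} on $(\Vflx\cup\Vbuf)\times(\colsflx\cup\colsbuf)$ and builds one $(v,c)$-absorber per \emph{edge} of that template. Second, you separate ``vertex-absorbing gadgets sharing the same colour multiset'' from a later ``colour-flexibility gadget''; in fact the two must be fused. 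The basic unit is a $(v,c)$-absorber: a $13$-vertex gadget admitting two rainbow paths with the same endpoints, one using $v$ and $c$, the other avoiding both. Toggling these along a perfect matching of the template is what makes vertices and colours absorbed \emph{simultaneously and exactly}, which is the whole point when $|\cols|=n$. Your description of post-hoc ``colour swaps through alternative random edges'' is not how the tight budget is met.
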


	In fact, we can prove the corresponding result for uniformly coloured perturbed digraphs.

	\begin{theorem}\label{main-thm}
	For any \(\delta \in (0,1/2)\) there exists \(C>0\) such that 
	the following holds.
	Let \(D_0\) be an $n$-vertex digraph with minimum semidegree at least $\delta n$ and let
	 \(D \sim D_0\cup \dnp{n}{C/n}\) be uniformly coloured in \([n]\).
	 Then with high probability \(D\) contains a rainbow directed Hamilton cycle.
	\end{theorem}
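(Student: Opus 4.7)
The plan is to follow the absorption method, adapted to the uniformly coloured directed setting with the tight palette \([n]\). The overall scheme is: reserve a small set \(\Vabs \subseteq V(D)\) of \emph{absorbing} vertices together with a small disjoint palette \(\Cabs \subseteq [n]\) of \emph{absorbing} colours; use the remainder to build a long rainbow directed path \(P\) on \(V(D)\setminus \Vabs\) whose colours come from \([n]\setminus \Cabs\); and finally absorb \(\Vabs\) into \(P\) using the colours from \(\Cabs\), closing up to a rainbow directed Hamilton cycle. Because the number of colours equals \(n\), no colour can be ``wasted'' during the construction, so the main subtlety is orchestrating the three stages so that the colour palette is partitioned cleanly.

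First I would build the absorbing structure. Using the random edges of \(\dnp{n}{C/n}\) and the minimum semidegree of \(D_0\), I plan to find many small directed gadgets -- for example, short directed paths with two fixed endpoints -- that act as \emph{switchers}: for any vertex \(v\) outside the gadget, the gadget supports a rainbow directed path through its own vertices, and also a rainbow directed path through its own vertices together with \(v\), both using the same endpoints and the same small subset of \(\Cabs\). A Hall-type / probabilistic argument then produces a \emph{flexible cover} on \(\Vabs\) with palette \(\Cabs\) such that for any ``leftover'' set \(L \subseteq V(D)\setminus \Vabs\) of bounded linear size (paired appropriately with a subset of \(\Cabs\)) one can absorb \(L\) into a rainbow directed path spanning \(\Vabs\cup L\) whose colours come precisely from \(\Cabs\). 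The directed nature requires treating in- and out-degrees symmetrically so gadgets can be traversed consistently.

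Next I would produce the long rainbow path. On \(V(D)\setminus \Vabs\) with the palette \([n]\setminus \Cabs\), the digraph still has minimum semidegree \((\delta-o(1))n\), the random edges still have density \(\Omega(1/n)\), and each edge still receives a colour nearly uniformly from a set of size \((1-o(1))n\). A rainbow directed P\'osa-rotation / DFS argument, combined with a connecting lemma provided by the random edges, allows covering all but a set \(L\) of at most \(|\Vabs|/2\) vertices by a rainbow directed path \(P\) whose endpoints can be chosen with some flexibility. The connecting lemma is then invoked to join \(P\) to the absorbing structure while preserving the rainbow property; the random edges supply enough free colours to realise these connections.

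The main obstacle, in my view, is the final colour-balancing step, precisely because the palette has exactly \(n\) colours. After building \(P\) and the absorbing gadgets somewhat independently, the colours actually used by \(P\) will not equal \([n]\setminus \Cabs\) on the nose, and the leftover set \(L\) will not be in automatic bijection with the unused part of \(\Cabs\). To reconcile this I would design the flexible cover to be flexible on \emph{both} sides, namely absorbing arbitrary vertex subsets of the correct size \emph{and} using arbitrary colour subsets of \(\Cabs\) of the correct size. This can be arranged by oversampling gadgets and invoking a two-sided matching argument -- a Hall-type or expansion argument on an auxiliary bipartite digraph between vertices-to-absorb and colours-to-use. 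The remainder is then careful bookkeeping of constants so that a single \(C=C(\delta)\) suffices for all three stages simultaneously.
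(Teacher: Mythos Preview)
Your overall architecture---reserve an absorbing structure, build a long rainbow directed path on the rest via a DFS-type argument, then absorb the leftover---is exactly the paper's strategy, and you correctly identify the crux: with exactly $n$ colours the absorber must be flexible in \emph{both} vertices and colours, handled in the paper via a robustly matchable bipartite graph between a designated vertex set and a designated colour set (Montgomery's template, adapted by Gould--Kelly--K\"uhn--Osthus).

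There are, however, two points where your plan is imprecise in a way that matters. First, your switchers are described as absorbing a vertex $v$ while using ``the same small subset of $\Cabs$'' in both states. That cannot give colour flexibility. The paper's gadget is a $(v,c)$-\emph{absorber}: two rainbow directed paths with the same endpoints, one spanning $V(A_{v,c})$ and $\cols(A_{v,c})$, the other spanning $V(A_{v,c})\setminus\{v\}$ and $\cols(A_{v,c})\setminus\{c\}$. The toggle removes one vertex \emph{and} one colour simultaneously; this is what makes the bipartite matching between flexible vertices and flexible colours meaningful. Second, you propose that the gadgets directly absorb an arbitrary leftover set $L\subseteq V(D)\setminus\Vabs$. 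But the absorbers are built in advance, each tied to a \emph{specific} pair $(v,c)$, and you cannot afford an absorber for every vertex of $D$ (that would make $\Habs$ of size $\Theta(n)$). The paper inserts an intermediate ``reservoir'' layer: small flexible sets $\Vflx,\colsflx\subseteq V(\Habs),\cols(\Habs)$ with the property that any small external $V',\cols'$ can be threaded through $\Vflx,\colsflx$ by short rainbow paths (Lemma~\ref{lemma:cover}); the absorbers then only need to handle which subset of $\Vflx,\colsflx$ was consumed, and those sets are fixed in advance.

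Finally, you should not underestimate the work hidden in ``find many small directed gadgets''. Constructing a single $(v,c)$-absorber robustly (avoiding any prescribed $O(\gadgetb n)$ vertices and colours) is the most technical part of the paper: it requires the digraph regularity lemma to locate a suitably oriented $K_{2,4}$ whose edge colours satisfy several prescribed coincidences, together with monochromatic-matching arguments to synchronise colours across the gadget (Lemmas~\ref{lemma:ABC}, \ref{lemma:XYZU}, \ref{lemma:K24}). Your sketch gives no indication of how this would be done.
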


	As explained above, for $\delta \in (0,1/2)$, both results have the optimal edge probability, up to the dependence of $C$ on $\delta$.
	We remark that the first two authors proved \Cref{undirected-main-thm} in unpublished work\footnote{See \href{https://arxiv.org/abs/2304.09155}{https://arxiv.org/abs/2304.09155v1}.}, using a somewhat simpler version of \Cref{lemma:main}.
    Here, we will deduce 
	\Cref{undirected-main-thm} from \Cref{main-thm} by a variation of McDiarmid's celebrated coupling argument~\cite{mcdiarmid} (see \Cref{section:proof-main-result}).
	
	The paper is structured as follows.
	In \Cref{section:proof-sketch} we sketch the proof of \Cref{main-thm}.
	In \Cref{section:proof-main-result} we prove \Cref{main-thm} assuming \Cref{lemma:main}, the key lemma of the paper, and deduce \Cref{undirected-main-thm} from \Cref{main-thm}.
	In \Cref{section:prelims} we state and prove some preliminary results that we will need later.
	\Cref{section:absorber} is the most technical, where we prove the existence of the `gadgets' which underpin \Cref{lemma:main}.
	In \Cref{section:proof-main-lemma} we prove \Cref{lemma:main}.
    We finish with some concluding remarks in \Cref{sec:conclusion}.

	\textbf{Notation.}
		Given a digraph \(D\) and $x,y \in V(D)$, we write \(xy \) for the edge directed from \(x\) to \(y\).
		Given
		\(X, Y\subseteq V(D)\), we write \(E_D(X,Y)\) for the set of  edges \(xy\) with \(x\in X\) and \(y\in Y\), and \(e_D^+(X,Y) = |E_D(X,Y)|\) for its size, and similarly \(e_D^-(X,Y) = |E_D(Y,X)|\).
		The out-neighbourhood of a vertex \(v\) is denoted by \(N_D^+(v)\) and its size by $\deg_D^+(v)$.
		For a vertex set \(Y\), we write \(N_D^+(v, Y) = N_D^+(v) \cap Y\) and denote its size by \(\deg_D^+(v,Y) = |N_D^+(v, Y)|\).
		Similarly we will use $N_D^-(v), \deg_D^-(v)$, $N_D^-(v,Y)$ and \(\deg^-_D(v,Y)\).
		We will suppress \(D\) when the digraph in question is clear.
		We recall that the \emph{length} of a (directed) path is the number of its edges.
		Given an edge-coloured digraph $D$, we denote the colour of an edge $e$ by $\cols(e)$ and the set of colours on the edges of a subdigraph \(D'\) by \(\cols(D')\).
		Moreover we say that \(D'\) is \emph{spanning} in a colour set \(\cols'\) if \(\cols(D') = \cols'\).
		
        Throughout the paper,
		we will  assume that \(n\) is sufficiently large.
		Asymptotic notation hides absolute constants: if for some \(x, \eps, n>0\) we write \(x=O(\eps n)\), then there is an absolute constant \(C>0\), which does not depend on \( x, \eps, n\) or any other parameters, such that \(x\le C \eps n\).
        We write \(x= \Omega(y)\) if \(y=O(x)\), and we write \(x=\Theta(y)\) if both \(x=\Omega(y)\) and \(x=O(y)\).
		For $x,y \in (0,1)$, we write \(x \ll y\) if \(x<f(y)\) for an implicit positive increasing function $f$.

\section{Proof sketch} \label{section:proof-sketch}

	Our proof uses the \emph{absorption method}.
	This method is typically applicable when one searches for a spanning subgraph, and involves two stages: finding an almost spanning subgraph; and dealing with the remainder, by having a `special' set of vertices, put aside at the beginning, that can cover any sufficiently small set of vertices.
	For finding the special set of vertices, we prove \Cref{lemma:main}.
    It states that with high probability
	the perturbed digraph has a subgraph \(\Habs\) such that, for any small sets of vertices \(V'\) and colours \(\cols'\) with \(\abs{V'} = \abs{\cols'}\), disjoint from the vertices and colours of \(\Habs\) the following holds:
	there exists a rainbow directed path \(Q\) with
	vertex set \(V' \cup V(\Habs)\) and colours \(\cols' \cup \cols(\Habs)\), whose ends can be chosen arbitrarily within \(V'\).

	Let us briefly sketch the proof of \Cref{lemma:main}.
	We first put aside a subset of the vertices and a subset of the colours, which are typically called the `reservoir' or the `flexible set',
	that have the following property:  for any sets of vertices and colours \(V'\) and \(\cols'\) of the same small size (much smaller than the reservoir) which are disjoint from the reservoir,  we can find a rainbow directed path \(Q_0\) that
	uses \(V'\), \(\cols'\), and a set of $O(|V'|)$ vertices and colours from the reservoir.
	The question is then how to cover the rest of the reservoir; to this end, we build an `absorbing structure' (\(\Habs\) above) which has the following property: it can `absorb' any subset of vertices and colours of the same size of the reservoir in a rainbow directed path \(\Qabs\). Then combining \(\Qabs\) and 
	\(Q_0\) gives \(Q\). 

	The `absorbing structure' \(\Habs\) is built by putting together several `gadgets' or `absorbers', graphs on \(\Theta(1)\) vertices with the following property:
	each gadget has two directed paths with the same endpoints such that one avoids a designated vertex $v$ and colour $c$ in the reservoir, and the other one `absorbs' $v$ and $c$ (see \Cref{fig:absorber} in \Cref{section:absorber}).
	This absorbing structure is based on one that was introduced by  Gould, Kelly, K\"uhn and Osthus~\cite{kuhn-osthus} for constructing rainbow Hamilton paths in random optimal colourings of the (undirected) complete graph, which in turn is based on ideas of Montgomery \cite{montgomery}.

\section{Proof of \Cref{main-thm} and \Cref{undirected-main-thm}} \label{section:proof-main-result}

	In this section we prove the main theorem, \Cref{main-thm}, and use it to deduce \Cref{undirected-main-thm}.
	We will use \Cref{lemma:main} below, which we prove in \Cref{section:proof-main-lemma}.

	\begin{lemma} \label{lemma:main}
		Let \(
		1/n \ll 1/C \ll \eta \ll \gamma \ll \delta
		\).
		Let $D_0$ be a digraph on \(n\) vertices with minimum semidegree at least \(\delta n\),
		and suppose that \(D \sim D_0\cup \dnp{n}{C/n}\) is uniformly coloured in \(\cols = [n]\).
		Then, with high probability, 
		\(D\) contains a digraph \(\Habs \) on at most
		\(\gamma n\) vertices with the following property.
		For any 
		\(V' \subseteq V\setminus V(\Habs)\), 
		\(\cols ' \subseteq \cols \setminus \cols(\Habs) \) with 
		\(2 \le \abs{V'} = \abs{\cols'}\le \eta n\)
		and distinct \(x,y\in V'\),
		there exists a rainbow directed path \(Q\) such that
		\begin{itemize}
			\item 
				$Q$ is a path from $x$ to $y$,
			\item \(V(Q) = V(\Habs) \cup V'\),
			\item \(\cols(Q) = \cols(\Habs) \cup \cols'\).
		\end{itemize}
	\end{lemma}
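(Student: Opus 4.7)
My plan follows the two-stage absorption strategy sketched in Section~\ref{section:proof-sketch}. I would reveal the randomness of $\dnp{n}{C/n}$ and of the uniform colouring in several phases, and take $\Habs$ to be the union of a \emph{reservoir} $(V_R,\calc_R)$ consisting of $\Theta(\gamma n)$ uniformly random vertices and $\Theta(\gamma n)$ uniformly random colours, together with an \emph{absorbing backbone} formed of $\Theta(\gamma n)$ constant-size gadgets that live outside the reservoir.

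First, I would prove that the reservoir is \emph{flexible}: for any $V'\subseteq V\setminus V_R$ and $\calc'\subseteq[n]\setminus\calc_R$ with $2\le|V'|=|\calc'|\le\eta n$ and any $x,y\in V'$, there is a rainbow directed $x$--$y$ path $Q_0$ whose vertex set equals $V'\cup V_R'$ and whose colour set equals $\calc'\cup\calc_R'$ for some $V_R'\subseteq V_R$, $\calc_R'\subseteq\calc_R$ of size $O(|V'|)$, with endpoints matched to two prescribed interface vertices of the backbone. This follows from a P\'osa-style rotation/extension argument inside $D_0\cup\dnp{n}{C/n}$: the semidegree condition supplies many in- and out-neighbours at each step, the random edges let us splice in one reservoir vertex at a time, and the uniform colouring leaves enough unused reservoir colours at each step to keep the path rainbow; a union bound over $O(n)$ extensions closes the argument.

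The heart of the proof is the backbone. For every pair $(v,c)\in V_R\times\calc_R$ I would construct an \emph{absorber gadget} $A_{v,c}$ of constant size with two external endpoints carrying two internally vertex- and colour-disjoint rainbow directed paths between them, one containing both $v$ and $c$ and the other avoiding both. Once $C$ is large, a counting argument using the semidegree of $D_0$ together with a few random edges of $\dnp{n}{C/n}$ produces a linear number of candidate gadgets per pair $(v,c)$, and an alteration step makes them pairwise vertex- and colour-disjoint across all pairs. Following Montgomery~\cite{montgomery} and Gould--Kelly--K\"uhn--Osthus~\cite{kuhn-osthus}, I would then embed these gadgets into a bipartite \emph{template} whose robust-matching property ensures that any subset $W\cup E\subseteq V_R\cup\calc_R$ of the correct size can be perfectly matched onto gadgets: on matched gadgets one selects the absorbing route and on the rest the avoiding route, while consecutive gadgets are stitched together by short rainbow connector paths built from $D_0$ and residual random edges.

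To finish the lemma, given $V',\calc',x,y$, I apply reservoir flexibility to get $Q_0$, run the backbone in the mode that covers exactly the leftover $(V_R\setminus V_R')\cup(\calc_R\setminus\calc_R')$, and concatenate at the two prearranged interface vertices to form $Q$. The main obstacle is realising the gadgets and the template under the tightest possible parameters: with random-edge probability only $C/n$ and palette size exactly $|\calc|=n$, typical colour classes contain only $\Theta(1)$ edges, so the simultaneous vertex- \emph{and} colour-disjointness needed across a linear number of gadgets and connectors sits at the edge of what first- and second-moment methods can provide. Overcoming this requires a carefully ordered, slice-by-slice exposure of the randomness and control of the in- and out- \emph{colour-degrees} of each vertex; the directed setting further doubles all orientation constraints compared with the undirected analogues of~\cite{montgomery,kuhn-osthus}.
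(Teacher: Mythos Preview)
Your overall architecture matches the paper's: flexible sets plus an absorbing backbone built from constant-size $(v,c)$-absorbers glued along a robustly matchable bipartite template in the style of Montgomery and Gould--Kelly--K\"uhn--Osthus. Two points, however, need correction.

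First, you cannot build a gadget ``for every pair $(v,c)\in V_R\times\calc_R$'' and then make them all pairwise disjoint; that is quadratically many constant-size gadgets, hence more than $n$ vertices. The paper fixes the $256$-regular template $H$ \emph{first} (on $7\mu n+7\mu n$ vertices, with $\Vflx,\colsflx$ as the flexible parts) and then greedily finds one absorber per edge of $H$, so only $O(\mu n)$ absorbers in total; Lemma~\ref{lemma:absorbers-exist} guarantees a fresh absorber avoiding any previously used set of $\nu n$ vertices and colours, which is what makes the greedy step go through.

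Second, and more seriously, the gadget construction is not ``a counting argument using the semidegree of $D_0$ together with a few random edges''. The paper's absorber (Figure~\ref{fig:absorber}) requires an oriented $K_{2,4}$ carrying two monochromatic cherries of distinct colours together with three further edges whose colours match a \emph{prescribed} triple coming from a rainbow triangle through $v$, plus an edge of the prescribed colour $c$. Locating such a structure when the palette has exactly $n$ colours and the random edge probability is only $C/n$ is the technical heart of the paper (Section~\ref{section:absorber}); it uses the degree form of the digraph regularity lemma (Theorem~\ref{lemma:regularity}) to find six clusters spanning $K_{2,4}$ in the reduced graph, and then a delicate colour-by-colour exposure inside the super-regular pairs (Lemmas~\ref{lemma:ABC} and~\ref{lemma:XYZU}). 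Your sketch does not identify this mechanism, and the first/second-moment methods with ``colour-degree control'' you propose are not known to produce the required colour coincidences here. Incidentally, the flexibility step is also simpler than a P\'osa rotation: the paper just strings the leftover vertices $v_0,\ldots,v_k$ together by rainbow length-$7$ paths through the flexible set (Lemma~\ref{lemma:cover}), each such path absorbing exactly one leftover colour.
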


	The next lemma is a rainbow directed version of a commonly used consequence of the depth first search algorithm~\cite{dfs-pseudorandom}, which we will use to find an almost spanning rainbow directed path.
	The undirected version of the lemma was used in \cite[Lemma 2.17]{ferber-kriv} in the binomial random graph and in~\cite[Proposition 2.1]{elad-hefetz} for finding a Hamilton cycle in the undirected perturbed graph.
	\Cref{lemma:rdfs-lemma} can be proven by following almost verbatim the proof of ~\cite[Proposition 2.1]{elad-hefetz}.

	\begin{lemma}[cf.\ Proposition 2.1~\cite{elad-hefetz} and  Lemma 2.17~\cite{ferber-kriv}] \label{lemma:rdfs-lemma}
		Let \(D\) be an $n$-vertex edge-coloured digraph.
		If every two disjoint sets of vertices \(X,\, Y\) of size \(k\) satisfy
		\(
		\abs{\cols{(E(X,Y))}} \ge n,
		\)
		then \(D\) has a rainbow directed path of length at least \(n -2k\).
	\end{lemma}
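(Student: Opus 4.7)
The plan is to adapt the depth-first search argument of \cite[Proposition~2.1]{elad-hefetz} to the directed rainbow setting. I run a DFS on $D$ along out-edges, maintaining a partition $V(D) = S \cup U \cup T$ in which $U = (u_1, \dots, u_m)$ is the current rainbow directed path, $S$ the set of completed vertices and $T$ the unexplored ones, together with a set $C \subseteq [n]$ recording every colour that has ever appeared on $U$. At each step: if $U$ is empty, move an arbitrary vertex of $T$ (if any) into $U$; otherwise, setting $v = u_m$, if there exists $w \in T$ with $vw \in E(D)$ and $\cols(vw) \notin C$, then move $w$ to the top of $U$ and add $\cols(vw)$ to $C$, while if no such $w$ exists, move $v$ from $U$ to $S$. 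The crucial feature is that $C$ grows monotonically: once a colour has been placed on $U$, it is never used again, even if the responsible edge is later retracted.

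Let $L^*$ be the maximum length (in edges) of $U$ ever attained during this process, and suppose for contradiction that $L^* \le n - 2k - 1$. The key invariant is $\cols(E_D(S, T)) \subseteq C$ at every time: when a vertex $v$ is retracted, every out-edge from $v$ to the then-$T$ has colour already in the then-$C$, and since $T$ only shrinks and $C$ only grows, this remains true at every later time. Consider the first moment $t^*$ at which $|S| = k$. Since $|U| \le L^* + 1 \le n - 2k$ at time $t^*$, we have $|T| \ge n - k - (n - 2k) = k$, and applying the hypothesis to any $k$-subsets $S' \subseteq S$ and $T' \subseteq T$ yields $|C| \ge |\cols(E_D(S,T))| \ge |\cols(E_D(S',T'))| \ge n$. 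On the other hand, $|C|$ equals the number of extensions performed so far, as each extension contributes exactly one new colour. Writing $e$, $r$ and $s$ for the numbers of extensions, retractions and fresh starts by time $t^*$, the identity $|U| = s + e - r$ together with $r = k$ and $s \ge 1$ gives $e = |U| + k - s \le (n - 2k) + k - 1 < n$, contradicting $|C| = e \ge n$. Hence $L^* \ge n - 2k$, so the snapshot of $U$ at which the maximum is attained is a rainbow directed path in $D$ of length at least $n - 2k$.

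The only real subtlety is the choice of invariant: in the more natural rainbow DFS in which retraction releases the colour of the removed edge, the colours blocking different retracted vertices lie on different path snapshots, and no clean global bound on $|\cols(E_D(S,T))|$ seems available. Forcing $C$ to grow monotonically---retracting more aggressively in exchange---is what keeps the count of used colours small enough to force the contradiction.
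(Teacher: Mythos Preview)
Your proof is correct and is precisely the rainbow DFS argument the paper points to (the paper does not give its own proof, only remarks that \Cref{lemma:rdfs-lemma} follows almost verbatim from \cite[Proposition~2.1]{elad-hefetz}). The key twist you highlight---keeping $C$ monotone rather than releasing colours on retraction---is exactly what makes the standard DFS argument go through in the rainbow setting, and your bookkeeping $e = |U| + r - s \le (n-2k)+k-1 < n$ at the moment $|S|=k$ is clean and correct.
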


	The next lemma can easily be proved using Chernoff's bound (cf.\ \Cref{thm:Chernoff}).
	\begin{lemma} \label{lemma:rdfs-cond-holds}
		Let \( 1/n \ll 1/C \ll \alpha \le 1/2\) and let \(D \sim \drgnp{n}{C/n}\) be uniformly coloured in \(\cols = [n]\).
		Then, with high probability, every two disjoint sets of vertices \(X,\, Y\) of size \(\alpha n\) satisfy
		\(
		\abs{\cols{(E(X,Y))}} \ge (1-\alpha)n.
		\)
	\end{lemma}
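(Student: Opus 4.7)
The plan is, for fixed disjoint $X, Y \subseteq [n]$ with $|X| = |Y| = \alpha n$, to rephrase the bad event $|\cols(E(X,Y))| < (1-\alpha)n$ as the existence of a set $S \subseteq [n]$ of at least $\alpha n$ colours, every one of which is missing from the edges between $X$ and $Y$, and then take a union bound over all triples $(X, Y, S)$ with $|S| = \lceil \alpha n \rceil$.

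For a fixed such triple, each ordered pair $(x,y) \in X \times Y$ is, independently of the others, an edge of $D$ bearing a colour in $S$ with probability exactly $|S| \cdot (C/n^2) = \alpha C/n$: indeed $(x,y)$ is an edge of $\dnp{n}{C/n}$ with probability $C/n$, and, when present, is independently assigned a uniform colour in $[n]$. Over the $\alpha^2 n^2$ ordered pairs in $X \times Y$, the Chernoff bound (equivalently, the elementary inequality $(1-p)^k \le e^{-pk}$) gives
\[
\Pr\!\bigl[\text{no edge from } X \text{ to } Y \text{ has a colour in } S\bigr]
\;\le\; \left(1 - \tfrac{\alpha C}{n}\right)^{\alpha^2 n^2}
\;\le\; e^{-\alpha^3 C n}.
\]

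A union bound over the at most $\binom{n}{\alpha n}^3 \le (e/\alpha)^{3\alpha n}$ triples then yields
\[
\Pr\!\bigl[\exists \text{ a bad triple } (X, Y, S)\bigr]
\;\le\; \exp\!\bigl(3\alpha n \log(e/\alpha) - \alpha^3 C n\bigr),
\]
which is $o(1)$ provided $\alpha^2 C \ge 4\log(e/\alpha)$, a condition comfortably ensured by $1/C \ll \alpha$, since we may take $C$ as large as we like depending on $\alpha$. No substantial obstacle is expected; the only point meriting attention is the mutual independence of the per-pair trials, which is immediate from the product structure of $\dnp{n}{C/n}$ combined with the independent uniform colouring.
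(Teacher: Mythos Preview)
Your argument is correct and is essentially what the paper has in mind: the paper does not spell out a proof, merely remarking that the lemma ``can easily be proved using Chernoff's bound''. Your union bound over triples $(X,Y,S)$ combined with the estimate $(1-\alpha C/n)^{\alpha^2 n^2}\le e^{-\alpha^3 C n}$ is a standard and clean way to carry this out (the ``Chernoff'' step degenerates to bounding the probability of zero successes, which is exactly the elementary inequality you use).
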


	Our main theorem now follows easily from the previous three lemmas.

	\begin{proof}[Proof of Theorem~\ref{main-thm}]
		Let $\eta$ and $\gamma$ satisfy
		\begin{equation*}
			1/n \ll 1/C \ll \eta \ll \gamma \ll \delta,
		\end{equation*}
		and write $V=V(D)$ and $\cols=[n]$.
		By \Cref{lemma:main} we may assume that there exists a subdigraph \(\Habs \subseteq D\) with \(\abs{V(\Habs)} \le \gamma n\)
		and the properties stated in \Cref{lemma:main}, so in particular \(\abs{\cols(\Habs)} =\abs{V(\Habs)} -1 \); and by \Cref{lemma:rdfs-cond-holds} (on input $\alpha=\eta/4$) that any two disjoint subsets \(X,Y\subseteq V\) of size 
		\(k=\eta n/4\)
		satisfy
		\(\abs{\cols(E(X,Y))} \ge (1-\eta/4)n= n - k\).
		Let $\cols_2 = \cols \setminus \cols(\Habs)$ and let
		\(V_2\) be an arbitrary subset of \(V\setminus V(\Habs)\) of size \(n-|V(\Habs)|-k\). 
		Then $\abs{\cols_2} \ge n - \gamma n$, $\abs{V_2} = \abs{\cols_2} - k - 1$, and, for any two disjoint subsets $X, Y \subseteq V_2$ of size $k$,
		\[
		\abs{\cols(E(X,Y)) \cap \cols_2} \ge |\cols_2| - k \ge \abs{V_2}.
		\]
		Then, by \Cref{lemma:rdfs-lemma}, applied to the subgraph of $D$ spanned by edges in $D[V_2]$ coloured in $\cols_2$, there is a rainbow directed path $P_2$ of length at least 
        \(\abs{V_2} - 2k\) with vertices in $V_2$ and colours in $\cols_2$.
		Let \(V_2' = V \setminus \left( V(\Habs) \cup V(P_2) \right)\) and \(\cols_2 ' = \cols \setminus \left( \cols(\Habs) \cup \cols(P_2) \right)\) be the set of vertices and colours used by neither $\Habs$ nor $P_2$.
		Observe that $|V_2'| \le 3k \le \eta n - 2$
		and \(\abs{\cols_2'} = \abs{V_2'}+2 \).
		Denote the first and last vertices of $P_2$ by $x$ and $y$.
		Then by the property of \(\Habs\) there exists a rainbow directed path \(Q\) from $y$ to $x$, with  
		\(V(Q)=V(\Habs) \cup V_2' \cup \{x,y\}\),
		and \(\cols(Q)=\cols(\Habs)\cup \cols_2'\).
		The concatenation of \(P_2\) and \(Q\) gives a rainbow directed Hamilton cycle, as desired.
	\end{proof}

    Finally we prove \Cref{undirected-main-thm}.
    \begin{proof}[Proof of \Cref{undirected-main-thm}]
        Let \(C\) be given by \Cref{main-thm} such that for any digraph \(D_0\) on \([n]\) with minimum semidegree at least \(\delta n/4\), the perturbed digraph
        \(D_0 \cup \dnp{n}{\frac{C}{2n}}\) uniformly coloured in \([n]\) has with high probability a rainbow directed Hamilton cycle.
        Let \(N=\binom{n}{2}\) and \(e_1, \hdots, e_N\) be an enumeration of the edges of the (undirected) complete graph on \([n]\).
        For each \(0\le i \le N\), 
		define a randomly edge-coloured digraph \(\Gamma_i\)
        on \([n]\) as follows, recalling that $G_0$ is an $n$-vertex graph with minimum degree at least $\delta n$, and writing $e_j = \{x,y\}$.
        \begin{enumerate}[a)]
            \item For \(j>i\):
            \begin{enumerate}[1.]
                \item If \(e_j \in E(G_0)\), then add both \(xy, yx\) to \(E(\Gamma_i)\), and colour both edges with the same colour, chosen uniformly in \([n]\).
                \item If \(e_j \notin E(G_0)\), then add both \(xy, yx\) to \(E(\Gamma_i)\) together with probability \(\frac{C}{n}\), and colour both edges with the same colour, chosen uniformly in \([n]\).
            \end{enumerate}
           \item  For \(j\le i\):
           \begin{enumerate}[1.]
               \item if \(e_j \in E(G_0)\),
               toss a coin that comes up heads with probability 1/3.
               If it comes up heads, add both \(xy, yx\) to \(E(\Gamma_i)\)
               and colour each independently and uniformly at random in \([n]\).
               If it comes up tails, add each \(xy, yx\) independently with probability \(\frac{C}{2n}\) to \(E(\Gamma_i)\), and colour each independently and uniformly at random in \([n]\).
               \item If \(e_j \notin E(G_0)\), add each of \(xy, yx\) independently with probability \(\frac{C}{2n}\)  to \(E(\Gamma_i)\), and colour each independently and uniformly at random in \([n]\).
           \end{enumerate}
        \end{enumerate}

		The sequence is coupled, i.e.\ \(\Gamma_{i-1}\) and \(\Gamma_{i}\) agree as probability spaces (but not as digraphs) on all edges apart from \(e_i\), for $i \in [N]$.
        Clearly \(\Gamma_0 \sim G_0 \cup \gnp{n}{C/n} \) and is uniformly coloured in \([n]\), where we view the undirected edges as two parallel directed edges.
        It is also easy to see that \(\Gamma_N \sim D_0 \cup \dnp{n}{\frac{C}{2n}}\) and it is uniformly coloured in \([n]\), where \(D_0\) is a random subgraph of \(G_0\), with each edge selected independently (as two parallel edges) with probability 1/3.
        Indeed, for \(e=\{x,y\} \notin E(G_0)\), clearly each of \(xy, yx\) is in \(E(\Gamma_N)\) independently with probability \(\frac{C}{2n}\).
        For \(e\in E(G_0)\), with probability \(1/3\), both orientations of \(e\) are added to \(E(D_0)\), and each is coloured independently and uniformly in \([n]\);
        otherwise each orientation is added independently to \(E(\Gamma_0)\) with probability \(\frac{C}{2n}\) and coloured independently and uniformly in \([n]\).
		A straightforward application of the union bound and Chernoff's bound implies that with high probability the minimum semidegree of \(D_0\) is at least \(\delta n /4\).
        Hence, by the choice of \(C\), with high probability \(\Gamma_N\) has a directed rainbow Hamilton cycle.
        We will show that for each \(i\in [N]\)
		\begin{equation} \label{eqn:Gamma}
			\prob{\Gamma_{i-1} \text{ has a rainbow  Hamilton cycle} } \ge \prob{\Gamma_i \text{ has a rainbow  Hamilton cycle}},
		\end{equation}
        which then implies the theorem.
        
        To this end, reveal the randomness on all edges apart from \(e_i = xy\), i.e.\ on all those edges that \(\Gamma_{i-1}, \Gamma_i\) automatically agree on.
        There are three possibilities:
        \begin{enumerate}
            \item 
				\(\Gamma_{i-1} \cup \{xy, yx\} \) has no rainbow Hamilton cycle, irrespective of the colouring of \(xy, yx\). Then, regardless of the outcome of $e_i$, both $\Gamma_{i-1}$ and $\Gamma_i$ have no rainbow Hamilton cycle.
            \item 
				\(\Gamma_{i-1}\) has a rainbow Hamilton cycle without using \(\{xy, yx\} \).
				Then, regardless of the outcome of $e_i$, both $\Gamma_{i-1}$ and $\Gamma_i$ have a rainbow Hamilton cycle.
            \item 
				Both events above do not hold. In other words, \(\Gamma_{i-1}\) has a rainbow Hamilton cycle using one of \(xy, yx\), for some choice of colours, but it has no rainbow Hamilton cycle that avoids both of $xy, yx$.
				Let \(\cols_1, \cols_2 \subseteq \cols\), satisfy the following:
				\(\Gamma_{i-1} \cup xy\) has a rainbow Hamilton cycle through $xy$ if and only if \(\cols(xy) \in \cols_1\), and \(\Gamma_{i-1} \cup yx\) has a rainbow Hamilton cycle through $yx$ if and only if \(\cols(yx) \in \cols_2\).
				(So $\cols_1 \cup \cols_2 \neq \emptyset$.)
        \end{enumerate}
        Conditioning on either of the first two cases, both \(\Gamma_{i-1}, \Gamma_i\) have the same probability to have a rainbow Hamilton cycle.
        Conditioning on the third case, 
        the probability that \(\Gamma_{i-1}\) has a rainbow Hamilton cycle is
		\begin{equation*}
			\left\{
				\begin{array}{ll}
					\frac{|\cols_1 \cup \cols_2|}{n} & \text{ if $e_i \in E(G_0)$}, \\
					\frac{C}{n}\cdot \frac{\abs{\cols_1\cup \cols_2}}{n} & \text{ if $e_i \notin E(G_0)$}.
				\end{array}
			\right.
		\end{equation*}
		and the probability that $\Gamma_i$ has a rainbow Hamilton cycle is
		\begin{equation*}
			\left\{
				\begin{array}{ll}
					\frac{1}{3} \cdot \left(1 - \left(1 - \frac{|\cols_1|}{n}\right) \left(1 - \frac{|\cols_2|}{n}\right)\right) 
					+ \frac{2}{3} \cdot \left(\frac{C}{2n} \cdot \frac{|\cols_1| + |\cols_2|}{n} - \frac{C^2}{4n^2} \cdot \frac{|\cols_1||\cols_2|}{n^2}\right)
					\le \frac{|\cols_1| + |\cols_2|}{2n}
					\le \frac{|\cols_1 \cup \cols_2|}{n}
					& \text{ if $e_i \in E(G_0)$}, \\[.8em]
					\frac{C}{2n}\cdot \frac{\abs{\cols_1}+ \abs{\cols_2}}{n}
					- \frac{C^2}{4n^2} \cdot  \frac{\abs{\cols_1} \abs{\cols_2} }{n^2}
					\le
					\frac{C}{n}\cdot \frac{\abs{\cols_1}+ \abs{\cols_2}}{2n}
					\le \frac{C}{n} \cdot \frac{|\cols_1 \cup \cols_2|}{n}
					& \text{ if $e_i \notin E(G_0)$}.
				\end{array}
			\right.
		\end{equation*}
		This shows that in either case the probability that \(\Gamma_{i-1}\) has a rainbow Hamilton cycle is at least as large as the probability that $\Gamma_i$ has one, thereby proving \eqref{eqn:Gamma} and thus the theorem.
    \end{proof}

\section{Preliminaries} \label{section:prelims}

	Next we collect three preliminary results that we need: the Chernoff bound, \Cref{thm:Chernoff};
	that random sparse subgraph of dense hypergraphs have large matchings,  \Cref{lemma:linear-matching};
	and that in the perturbed digraph, between any two vertices, there is a large rainbow collection of directed paths of length three,  \Cref{lemma:con}.

	\begin{theorem}[{Chernoff Bound, \cite[eq.\ (2.8) and Theorem 2.8]{jlr}}] \label{thm:Chernoff} 
		For every \(\eps>0\) there exists \(c_\eps>0\) such that the following holds.
		Let \(X\) be the sum of mutually independent indicator random variables and write \(\mu = \expect{X}\).
		Then
		\[
		\prob{\abs{X- \mu} \ge \eps \mu} \le 2\exp(-c_\eps\, \mu).
		\]
	\end{theorem}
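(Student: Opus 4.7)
The plan is to prove the bound by the standard exponential moment (Bernstein--Chernoff) method, treating the upper and lower tails separately and taking $c_\eps$ to be the minimum of the two rate functions.

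Write $X = \sum_{i=1}^N X_i$ with $X_i \in \{0,1\}$ mutually independent, and set $p_i = \Ex[X_i]$, so that $\mu = \sum_i p_i$. For the upper tail, I would apply Markov's inequality to $e^{tX}$ for a parameter $t > 0$ to be chosen. By independence and the elementary inequality $1 + x \le e^x$,
\[
\Ex[e^{t X_i}] = 1 + p_i(e^t - 1) \le \exp\bigl(p_i (e^t - 1)\bigr),
\]
so $\Ex[e^{tX}] \le \exp\bigl(\mu (e^t - 1)\bigr)$. Therefore
\[
\prob{X \ge (1+\eps)\mu} \le e^{-t(1+\eps)\mu} \cdot \Ex[e^{tX}] \le \exp\bigl(\mu \cdot (e^t - 1 - t(1+\eps))\bigr).
\]
Optimising with $t = \ln(1 + \eps) > 0$ collapses this to $\bigl(e^{\eps} / (1+\eps)^{1+\eps}\bigr)^{\mu}$, which has the form $e^{-f(\eps)\mu}$ for some $f(\eps) > 0$ by strict convexity of $x \log x$.

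For the lower tail (relevant only when $\eps \in (0,1]$, since otherwise the event is empty), the symmetric argument applied to $e^{-tX}$ with $t > 0$ yields
\[
\prob{X \le (1-\eps)\mu} \le \exp\bigl(\mu \cdot (e^{-t} - 1 + t(1-\eps))\bigr),
\]
and choosing $t = -\ln(1-\eps)$ gives a bound of the form $e^{-g(\eps) \mu}$ with $g(\eps) > 0$. Setting $c_\eps := \tfrac{1}{2}\min(f(\eps), g(\eps))$ and combining the two tails via the union bound produces the claimed $2 \exp(-c_\eps \mu)$.

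There is no real obstacle: the only check is that $f(\eps), g(\eps) > 0$ for every $\eps > 0$, which follows from the strict convexity of $\phi(t) = e^t - 1 - t(1+\eps)$ (respectively $\psi(t) = e^{-t} - 1 + t(1-\eps)$) around the optimising $t$, together with the fact that these functions vanish at $t = 0$ and have negative derivative there when $\eps > 0$. The entire argument is standard, which is why the paper simply quotes it from \cite{jlr}.
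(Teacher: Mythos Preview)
Your argument is correct and is precisely the standard Bernstein--Chernoff exponential-moment proof. The paper itself gives no proof of this statement---it is quoted as a black box from \cite{jlr}---so there is nothing to compare against; your write-up simply supplies the textbook derivation that the citation points to.
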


	The next lemma, despite its technical appearance, proves the following straightforward statement: quite sparse random subgraphs of dense hypergraphs contain, with very high probability, a matching of linear size. A \emph{matching} in a hypergraph is a collection of pairwise vertex-disjoint edges. The \emph{degree} of a vertex \(v\) is the number of edges incident to \(v\).
	\begin{lemma} \label{lemma:linear-matching}
		Let \(1/n \ll \rho \ll \alpha, c, 1/r\) where $r \ge 2$ is an integer.
		Let \(\calH\) be an \(r\)-uniform hypergraph on \(n\) vertices with at least \(\alpha n^r\) edges.

		Writing $m = cn$, let \(\calH_m\) be the random subgraph of \(\calH\)
		that consists of \(m\) edges of \(\calH\), chosen with replacement and uniformly at random.
		Then, with probability at least \(1-\exp\left(-\frac{c \alpha^2 n}{3}\right)\),
		the hypergraph \(\calH_m\) has a matching of size at least \(\rho n\).

		Writing $p = cn^{-r+1}$, let \(\calH_p\) be the random subgraph of \(\calH\) where we keep each edge independently with probability 
		\(p\).
		Then, with probability at least
		\(1-\exp\left(-\frac{c \alpha^2 n}{3r}\right)\), 
		the hypergraph \(\calH_p\) has a matching of size at least \(\rho n\).
	\end{lemma}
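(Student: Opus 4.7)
The common idea underlying both parts is the following geometric observation: whenever $M$ is a matching in $\calH$ with $|M|<\rho n$, the vertex set $V(M)$ has at most $r\rho n$ vertices and is therefore incident to at most $r\rho n\cdot n^{r-1}=r\rho n^r$ edges of $\calH$; hence at least $(\alpha-r\rho)n^r\ge \alpha n^r/2$ edges of $\calH$ are disjoint from $V(M)$, using $\rho\ll \alpha/r$.

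For Part 1 I would examine the samples $f_1,\ldots,f_m$ in order and build a matching $M$ greedily, adding $f_i$ whenever $f_i\cap V(M)=\emptyset$. The observation above, combined with $|E(\calH)|\le n^r$, implies that at every step where $|M|<\rho n$ the next sample extends $M$ with conditional probability at least $\alpha/2$. Coupling the indicator sequence ``the $i$-th sample extends $M$'' with i.i.d.\ $X_1,\ldots,X_m\sim\mathrm{Bernoulli}(\alpha/2)$ so that the matching gains at least $X_i$ edges at step $i$ for as long as $|M|<\rho n$, I obtain $|M_m|\ge \min(\rho n,\sum_{i=1}^m X_i)$. Since $\mathbb{E}\sum X_i=cn\alpha/2$ is much larger than $\rho n$ (because $\rho\ll c\alpha$), the claimed bound follows from Chernoff's inequality (\Cref{thm:Chernoff}) applied to $\sum X_i$.

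For Part 2 I would instead use a union bound. The greedy algorithm run on $\calH_p$ produces a maximal matching in $\calH_p$, so if the resulting matching $M$ has $|M|<\rho n$ then no edge of $\calH$ disjoint from $V(M)$ lies in $\calH_p$. For any fixed vertex set $S\subseteq V(\calH)$ with $|S|<r\rho n$, the probability that none of the (at least $\alpha n^r/2$) edges of $\calH$ disjoint from $S$ is in $\calH_p$ is at most $(1-p)^{\alpha n^r/2}\le \exp(-c\alpha n/2)$. The number of candidates for $V(M)$ is at most $\binom{n}{r\rho n}\le \exp(r\rho n\log(e/r\rho))$, which is $\exp(o(c\alpha n))$ because $\rho\ll c\alpha/r$; hence a union bound over such sets gives a probability bound comfortably stronger than the stated $\exp(-c\alpha^2 n/(3r))$ for $\alpha\le 1$.

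The main technical subtlety is the stochastic domination in Part 1: one has to set up the coupling so that $|M_i|\ge \sum_{j\le i} X_j$ for every $i$ up to the hitting time $\tau=\min\{i:|M_i|=\rho n\}$, after which the remaining $X_i$ can be drawn independently. Once this is arranged, the rest reduces to standard Chernoff concentration for Part 1 and to the routine union bound sketched above for Part 2.
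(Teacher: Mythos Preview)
Your proposal is correct. For Part~2 you essentially recover the paper's argument: both proofs take a maximal matching, observe that its complement avoids a linear fraction of the edges, bound the probability that a fixed small vertex set meets every sampled edge, and then union bound over such sets. (The paper first passes to a subhypergraph of linear minimum degree before running this argument, but as your write-up shows this detour is not needed.)

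For Part~1 your route genuinely differs from the paper's. You build the matching greedily over the $m$ samples and couple the increments with i.i.d.\ Bernoulli variables, reducing the problem to a single Chernoff bound. The paper instead reuses the union-bound scheme from Part~2: if $\calH_m'$ has no matching of size $\rho n$, then the complement of a maximal matching is a large independent set, and one bounds the probability that any fixed large set is independent and sums over candidates. Your approach is more direct and avoids the enumeration of independent sets; the paper's approach has the virtue of treating both random models uniformly. One small point: with the crude bound $|E(\calH)|\le n^r$ your extension probability is only $\alpha/2$, which after the multiplicative Chernoff $\exp(-\delta^2\mu/2)$ gives exponent roughly $c\alpha n/4$, and this beats the target $c\alpha^2 n/3$ only because $\alpha\le 1/r!\le 1/2$; if you want a cleaner margin, note that the extension probability is actually at least $1-r\rho/\alpha$ (since $|E(\calH)|\ge \alpha n^r$), which is close to $1$ and makes the Chernoff step comfortable for any $\alpha\le 1$.
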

	
	\begin{proof}
		Write \(\beta(\calG)\) for the size of a largest matching of a hypergraph \(\calG\).

		It is not hard to see that \(\calH\) contains an induced subgraph \(\calH'\) of minimum degree at least \(\alpha n^{r-1}\) on at least \(\alpha n\) vertices.

		We first prove the result for \(\calH_m\).
        Let \(\calH'_m\) be the hypergraph with vertices \(V(\calH')\) and edges 
        \(E(\calH_m) \cap E(\calH')\).
        We will show that, with high probability, \(\calH'_m\) has a matching of size at least \(\rho n\). Clearly \(\beta(\calH_m) \ge \beta(\calH'_m)\), so the lemma readily follows.
        
		Suppose \(\beta(\calH'_m) < \rho n\), and let \(M\) be a maximal matching in \(\calH'_m\).
		Then \(S = V(\calH')\setminus V(M)\) is an independent set in \(\calH'_m\) and
		\(\abs{S} \ge (\alpha-r\rho)n\).
		By the minimum degree condition of \(\calH'\), the number of edges with all vertices in \(S\) is at least
		\(
			\frac{1}{r}|S| (\alpha n^{r-1} - r\rho n^{r-1}) \ge
			\frac{1}{r} (\alpha-r \rho)\, (\alpha - r \rho) n^r
			\ge 
			\frac{\alpha^2}{2r}\, n^r.
		\)
		This gives
		\begin{align*}
			\prob{S \text{ is independent in } \calH'_m}
			= \left(1- \frac{e(\calH[S])}{e(\calH)}\right)^m
			&\le \exp \left( - cn \cdot \frac{\frac{\alpha^2}{2r} \, n^r}{\binom{n}{r}} \right) \\
			&\le 
			\exp \left( - \frac{\alpha^2}{2}(r-1)!\, c n \right)\, .
		\end{align*}
		Then, since \(\alpha,r \rho<1/2\), the number of \(S\subseteq  V(\calH)\) with \( \abs{S} \ge (\alpha-r\rho)n\) is at most
		\[
			n\binom{n}{(\alpha-r\rho)n} 
			\le n\binom{n}{r\rho n}
			\le n \cdot \left(\frac{\e}{r\rho}\right)^{r\rho n}
			\le \e^{2r\rho n} (r\rho)^{-r\rho n} 
			= \exp\big((2r\rho - r\rho \log (r\rho))n\big).
		\]
		Thus, by the union bound,
		\(
		\prob{\beta(\calH'_m) < \rho n}
		\le \exp \left(- f_{c,r,\alpha} (\rho) n \right),
		\)
		where
		\[
		f_{c,r,\alpha}(\rho) = - 2r\rho + r\rho \log (r\rho) + \frac{(r-1)!}{2} c \alpha^2\, .
		\]
		Since \(f_{c,r,\alpha} (\rho)\) is continuous near \(0\) and \(
		f_{c,r,\alpha} (\rho) \tendsto \frac{(r-1)!}{2}\, c \alpha^2
		\)
		as \(\rho \tendsto 0\), for \(\rho\) sufficiently small 
		\(f_{c,r,\alpha} (\rho) \ge \frac{(r-1)!}{3}\, c \alpha^2 \ge \frac{c \alpha^2}{3} \), 
		which gives the first part of the lemma.

		For the second part of the lemma observe that the same argument works: with \(S\) as above, in \(\calH_p\) we have
		\[
		\prob{S \text{ is independent in } \calH_p}
		= (1-p)^{e(\calH[S])}
		\le 
        \e^{-p e(\calH[S])}
        \le 
		\exp \left( - \frac{c \alpha^2 n}{2r} \right)
		\]
		and a similar calculation as above shows that, for \(\rho\) sufficiently small,  the probability there is such an \(S\) is at most \(\exp\left(-\frac{c\alpha^2 n}{3r}\right)\).
	\end{proof}
 
	\begin{lemma}[Triangles and short paths] \label{lemma:con}
		Let \(1/n \ll 1/C \ll \lambda \ll \pathb \ll \delta, q\).
		Let \(\cols\) be a set of colours of size \(qn\),
        let $D_0$ be a digraph on \(n\) vertices with minimum semidegree at least \(\delta n\),
		and suppose that \(D \sim D_0\cup \dnp{n}{C/n}\) is uniformly coloured in \(\cols\).
		Then, with probability at least \(1-\exp\left(-\lambda\, n \right)\), the following holds.
		For any two vertices \(u,v \in V(D)\)
		there is a matching \(M\) of size  at least 
		\(\rho n\)
		such that  \(\bigcup_{xy \in M} \{ux,xy,yv\}  \) is rainbow.
  
		Moreover, with probability at least \(1-\exp\left(-\lambda n \right)\), for any $u \in V(D)$
		there is a matching \(M\) of size at least \(\rho n\)
		such that  \( \bigcup_{xy \in M} \{ux,xy,yu\}  \) is rainbow.
  	\end{lemma}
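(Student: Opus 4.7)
The plan is to prove \Cref{lemma:con} in two essentially independent stages for each fixed pair \((u,v)\), and then union-bound over all such pairs (or all vertices, in the triangle case). The first stage uses \Cref{lemma:linear-matching} to find a matching \(M^*\) of roughly \(2\rho n\) pairwise vertex-disjoint length-\(3\) paths from \(u\) to \(v\) in \(D\), relying only on the random edges. The second stage carves a rainbow sub-matching of size exactly \(\rho n\) out of \(M^*\) by a greedy colour-avoidance procedure that exploits the uniform random colouring. The moreover clause about triangles follows by setting \(v = u\).

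For the first stage I will set \(A := N^+_{D_0}(u) \setminus \{u,v\}\) and \(B := N^-_{D_0}(v) \setminus \{u,v\}\), pick arbitrary disjoint subsets \(A' \subseteq A\) and \(B' \subseteq B\) of size at least \(\delta n / 3\) (for instance, let \(A'\) be any subset of \(A\) of that size and take \(B' = B \setminus A'\)), and consider the complete bipartite graph on \(A' \sqcup B'\), viewed as a \(2\)-uniform hypergraph. It has at least \(\delta^2 n^2 / 9\) edges. Keeping \(\{x, y\}\) iff the directed edge \(xy\) appears in \(\dnp{n}{C/n}\) retains each edge independently with probability \(C/n\), so the second part of \Cref{lemma:linear-matching} (with \(r=2\) and \(c = C\)) produces a matching of size at least \(2\rho n\) with probability at least \(1 - \exp(-\Omega(n))\). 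Each pair \((x_i,y_i)\) in this matching gives a length-\(3\) path \(u \to x_i \to y_i \to v\) with middle edge in the random digraph and outer edges in \(D_0\); the matching property on \((x_i, y_i)\) guarantees that the \(3|M^*|\) edges used across these paths are pairwise distinct.

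The second stage conditions on the random digraph, under which the colours of the \(3|M^*|\) chosen edges remain i.i.d.\ uniform in \([qn]\). I will greedily process the paths \(P_1, P_2, \ldots\) in some order, selecting \(P_i\) iff the three colours of its edges are pairwise distinct and none lies in the set \(U\) of colours already used, stopping once \(\rho n\) paths have been selected. Since \(|U| < 3\rho n\) whenever the process is still running and the colours of \(P_i\) are independent uniform, the conditional selection probability is at least \((1 - 4\rho/q)^3 \ge 1/2\), using \(\rho \ll q\). A coupling with an i.i.d.\ \(\mathrm{Bernoulli}(1/2)\) sequence then shows that the number of selected paths stochastically dominates \(\mathrm{Bin}(2\rho n, 1/2)\), so by Chernoff the procedure fails to collect \(\rho n\) rainbow paths with probability only \(\exp(-\Omega(n))\).

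Combining the two stages for a fixed \((u,v)\) gives failure probability \(\exp(-\Omega(n))\); a union bound over at most \(n^2\) pairs (or \(n\) vertices, in the triangle case, where one sets \(v = u\), \(A = N^+_{D_0}(u) \setminus \{u\}\) and \(B = N^-_{D_0}(u) \setminus \{u\}\)) converts this into the claimed \(1 - \exp(-\lambda n)\) bound, since \(\lambda\) can be chosen small enough within the hierarchy \(1/C \ll \lambda \ll \rho\). The main obstacle is the greedy rainbow-extraction stage, where the selection events are not independent because they share the evolving set \(U\); I handle this by the coupling above. This is also the reason for splitting the argument into two stages rather than packaging colours directly into a higher-uniform hypergraph application of \Cref{lemma:linear-matching}, where hyperedges sharing a common \((x,y)\) pair would be retained with negatively correlated (hence non-independent) probabilities, breaking the hypothesis of the lemma.
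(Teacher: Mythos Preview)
Your overall strategy is sound, but there is one quantitative slip in Stage~2. You lower-bound the conditional selection probability by $1/2$ and couple with $\mathrm{Bin}(2\rho n,\tfrac12)$; this binomial has mean exactly $\rho n$, so Chernoff does \emph{not} give $\Pr[\,\cdot < \rho n\,]=\exp(-\Omega(n))$ --- that probability is about $1/2$. The fix is immediate: since $\rho\ll q$ you actually have $(1-4\rho/q)^3\ge 3/4$, so the count dominates $\mathrm{Bin}(2\rho n,\tfrac34)$, and now Chernoff gives failure probability $\exp(-\Omega(\rho n))$. (Alternatively, take $|M^*|\ge 4\rho n$ in Stage~1.) A related wording issue: because you stop at $\rho n$ selections, the count is capped at $\rho n$ and cannot literally stochastically dominate any $\mathrm{Bin}(2\rho n,p)$; it is cleaner to run the greedy without stopping and use $|U|\le 6\rho n$ throughout.

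With that fix your proof is correct, but the colour stage differs from the paper's. The paper also finds a large matching $M$ between disjoint subsets of $N^+_{D_0}(u)$ and $N^-_{D_0}(v)$ via \Cref{lemma:linear-matching}; but instead of a greedy extraction it reveals the colouring in two rounds. First it exposes only \emph{whether} each path $uzwv$ is rainbow (probability $1-o(1)$, independent across paths since the $3|M|$ edges are distinct) and keeps a sub-matching $M'$ of size $\Omega(n)$ by Chernoff. Conditioned on this, the colour triple of each surviving path is uniform over triples of distinct colours and independent across paths, so the paper applies \Cref{lemma:linear-matching} a \emph{second} time --- now with $r=3$, to the complete $3$-uniform hypergraph on $\cols$, in its $\calH_m$ form --- to extract a colour-disjoint subfamily of size $\rho n$. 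This two-round reveal is precisely what sidesteps the dependence you worried about in your final paragraph. Your greedy-plus-coupling route is more elementary and avoids the second hypergraph application; the paper's route is more uniform, reducing both the edge stage and the colour stage to the same matching lemma.
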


	\begin{proof}
		Let $\rho_1$ satisfy
		\( 1/C \ll \lambda \ll \pathb \ll \pathb_1 \ll \delta,q \).
		Fix \(u,v \in V\).
		By the minimum semidegree assumption on $D_0$, there exist disjoint subsets
		\(N_u \subseteq N_{D_0}^+(u)\),
		\(N_v \subseteq N_{D_0}^-(v)\),
		each of size \(\delta n/2\).
		Consider the oriented bipartite graph with bipartition \((N_u, N_v)\) and edges
		\[
		\left\{\, zw \in E(\dnp{n}{C/n}):\: z\in N_u,\ w \in N_v \right\}\,
		\]
		Then, by \Cref{lemma:linear-matching} (applied with $(\alpha, c, r, n)_{\ref{lemma:linear-matching}} = (1/4, \delta C, 2, \delta n)$), with probability 
		\(1 - \e^{-\Omega( \delta C n)}\), 
		it has a matching \(M\) of size \(\pathb_1 n\).
		For each \(zw\in M\), reveal whether the directed path \(uzwv\) is rainbow, without exposing the colours.
		Then each \(uzwv\) is rainbow independently with probability \(1-o(1)\). 
		Hence by Chernoff's bound (\Cref{thm:Chernoff}), with probability \(1-\e^{-\Omega(\pathb_1 n)}\),
		there is \(M'\subseteq M\) with \(\abs{M'} \ge \pathb_1 n /2\) such that for each \(zw \in M'\), \(uzwv\) is rainbow.
        Let \( \calP' = \{uzwv: zw \in M'\}\), and
		observe that the same colour may still repeat on different paths in $\calP'$.

		We now show that we can find a large subset of \(\calP'\) where the paths are pairwise colour-disjoint.
		Reveal the colours on the edges of the paths in \(\calP'\).
		By symmetry, each triple of distinct colours in \(\cols\) is equally likely to appear in \(\calP'\).
		Hence \(\calP'\) corresponds to selecting uniformly at random with replacement \(\abs{\calP'} \ge \pathb_1 n/2\) edges from the complete 3-graph with vertex set \(\cols\).
		Thus, by \Cref{lemma:linear-matching} (applied with $(\alpha,c,r,n)_{\ref{lemma:linear-matching}} = (1/7,\rho_1/(2q),3,qn)$), 
		with probability \( 1- \e^{-\Omega(\pathb_1 n)}\), this $3$-graph has a matching of size $\rho n$.
		This corresponds to an \(M\dprm\subseteq M'\) of size \(\pathb n\)
		so that \( \bigcup_{xy \in M\dprm} \{ ux, xy, yv\}\) is rainbow.

		The probability this fails for some pair 
		\(u,v\) is, by the union bound, at most
		\[
		n^2 \cdot \left( \e^{-\Omega( \delta C n)} + \e^{-\Omega(\pathb_1 n)} + e^{-\Omega( \pathb_1 n)}\right)
		\le 
		\e^{-\lambda n},
		\]
		proving the first statement of the lemma.

		The second statement of the lemma follows similarly by finding a large matching between disjoint subsets of \(N^-_{D_0}(u)\) and \(N^+_{D_0}(u)\).
	\end{proof}
\section{Finding absorbers} \label{section:absorber}

	In this section we show how to find `absorbers', which are the building blocks for the digraph \(\Habs\) in \Cref{lemma:main}.

	\begin{definition}[Absorber] \label{def:absorber}
		Let \(v\) be a vertex and \(c\) a colour.
		A \emph{\((v,c)\)-absorber}
		is an edge-coloured digraph $A_{v,c}$ with \(v\in V(A_{v,c})\) and \(c\in \cols(A_{v,c})\) that has two directed paths \(P, P'\) with the following properties:

		\begin{itemize}
			\item they are rainbow;
			\item they have the same first and last vertex;
		\item 
		\(P\) is spanning in \(V(A_{v,c})\) and
		\( V(P') = V(P)\setminus \{v\} =  V(A_{v,c})\setminus \{v\} \);
		\item 
		\(P\) is spanning in \(\cols(A_{v,c})\) and
		\(
		\cols(P')
		= \cols(P)\setminus\{c\}
		= \cols(A_{v,c})\setminus \{c\}
		\).
		\end{itemize}
		We call \(P\) the \emph{\((v,c)\)-absorbing path}
		and \(P'\) the \emph{\((v,c)\)-avoiding path}.
		The \emph{internal vertices} of \(A_{v,c}\) are \(V(A_{v,c})\setminus \{v\}\) and the \emph{internal colours} are \(\cols(A_{v,c}) \setminus \{c\}\).
		The \emph{first and last vertices} of the absorber are the first and last vertices of $P$ and $P'$.
	\end{definition}

    The aim of this section is to prove the following lemma, which says that for any vertex \(v\), colour \(c\) and any small (but linear in size) sets of forbidden vertices and colours, we can find a \((v,c)\)-absorber.

	\begin{lemma} \label{lemma:absorbers-exist}
		Let \(1/n \ll 1/C \ll \gadgetb \ll \delta <1/2\).
		Let $D_0$ be a digraph on \(n\) vertices with minimum semidegree at least \(\delta n\), and suppose that \(D\sim D_0 \cup \dnp{n}{C/n}\) is uniformly coloured in \(\cols=[n]\).
		Then, with high probability,
		the following holds.
		For any \(v\in V(D)\) and \(c \in \cols\),
		and for all \(V' \subseteq V(D)\) and \(\cols' \subseteq \cols\)
		each of size at least \((1-\gadgetb )n\),
		there exists a \((v,c)\)-absorber on  13 vertices with internal vertices in \(V'\) and internal colours in \(\cols'\).
	\end{lemma}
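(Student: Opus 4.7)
My plan for \Cref{lemma:absorbers-exist} is the standard absorption-lemma recipe: fix an explicit 13-vertex absorber topology, show that $D$ contains many coloured copies of it forming $(v,c)$-absorbers for each pair $(v, c)$, prove that these copies are well spread among vertices and colours, and take a union bound over $(v, c)$. The natural topology to try first is the skeleton on vertex set $\{v, u_1, \ldots, u_{12}\}$ with $P' = u_1 u_2 \cdots u_{12}$ as the avoiding path and $P = u_1 \cdots u_6 v u_7 \cdots u_{12}$ as the absorbing path (so $P$ is obtained from $P'$ by inserting $v$ between $u_6$ and $u_7$); writing $c_i = \cols(u_i u_{i+1})$, $a = \cols(u_6 v)$ and $b = \cols(v u_7)$, this is a $(v, c)$-absorber exactly when the $c_i$ are distinct, $c \notin \{c_1, \ldots, c_{11}\}$ and $\{a, b\} = \{c, c_6\}$ as unordered pairs. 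However, this rigid one-insertion topology is too brittle: a back-of-the-envelope first moment shows only $O(1/n)$ expected pairs $(u_6, u_7)$ for which the critical ``three-edge coincidence'' $\{a, b\} = \{c, c_6\}$ holds, so the lemma would fail with probability $1 - O(1/n)$ for a single $(v, c)$ and certainly cannot be boosted by a union bound over $n^2$ such pairs. The right approach, in the spirit of the Montgomery/Gould--Kelly--K\"uhn--Osthus rainbow absorbers cited in \Cref{section:proof-sketch}, is to design the 13-vertex gadget so that $v$ can be inserted at \emph{several} interchangeable positions within the gadget, and correspondingly the colour $c$ can appear on any of several interchangeable edges; the 13-vertex budget is precisely tuned to allow such a flexible gadget.

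With a suitable flexible topology, the counting step uses the minimum semidegree of $D_0$ (together with $\dnp{n}{C/n}$ for any troublesome edges at $v$) to produce $\Omega(n^{11})$ candidate copies inside $V'$ for each fixed $v$, with probability $1 - \exp(-\Omega(n))$ by Chernoff's bound (\Cref{thm:Chernoff}); the per-candidate probability of satisfying all colour conditions with internal colours in $\cols'$ is polynomial in $1/n$, so $\expect{X_{v,c}} = n^{\Omega(1)}$ where $X_{v,c}$ is the number of $(v, c)$-absorbers found. A second-moment computation, stratifying pairs of candidates by their edge overlap and observing that two candidates can be positively correlated only via the colours of shared edges, then upgrades this first-moment estimate to an exponential-tail concentration of $X_{v, c}$ strong enough to beat the $n^2$ union bound. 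Parallel arguments bound the number of $(v, c)$-absorbers containing any particular internal vertex or colour by $o(\expect{X_{v,c}} / (\gadgetb n))$ with the same exponential tail.

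With these estimates in hand, any forbidden sets of sizes at most $\gadgetb n$ in $V$ and $\cols$ jointly destroy only $O(\gadgetb \cdot \expect{X_{v,c}})$ absorbers, leaving at least one valid absorber when $\gadgetb \ll \delta$; a union bound over the $n^2$ choices of $(v, c)$ then closes the proof. The main obstacle in my plan is designing the 13-vertex topology itself: the simple one-insertion gadget described above is colour-rigid and fails, and one must strike a balance between enough built-in flexibility (several insertion positions so that $v$ and $c$ have many ``places'' to appear) and the need to keep all required edges available in a random perturbed digraph whose random part contributes only linearly many edges. Writing down the correct gadget explicitly and verifying its robustness and concentration properties --- essentially adapting the rainbow absorbers of Gould--Kelly--K\"uhn--Osthus to the directed setting --- is the technical heart of the section.
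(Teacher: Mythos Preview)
Your proposal leaves the hardest ingredient unspecified --- the gadget --- and your intuition about its shape is off. The paper's absorber (\Cref{fig:absorber}) does \emph{not} gain robustness by letting $v$ sit at several interchangeable positions; $v$ occupies a unique slot in an oriented triangle $v_1vv_2$. The mechanism is a \emph{colour-swap}: the triangle colours $\cols(v_1v),\cols(vv_2),\cols(v_1v_2)$ are duplicated on the edges $xy,zu,xz$ of an oriented $K_{2,4}$ on $\{x,y,z,u,w_1,w_2\}$, with $\cols(yu)=c$ and the pairs $\{yw_1,zw_1\}$ and $\{w_2y,w_2z\}$ each monochromatic. The $K_{2,4}$ then admits two $x$--$u$ routes, $xz\,w_1\!\cdots w_2\,yu$ and $xy\,w_1\!\cdots w_2\,zu$; pairing them with the two triangle paths $v_1vv_2$ and $v_1v_2$ yields two rainbow paths whose colour sets differ exactly by $\{c\}$ and whose vertex sets differ exactly by $\{v\}$. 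Two length-three connecting paths (supplied by \Cref{lemma:con}) make up the remaining four vertices. So the ``flexibility'' lives in five engineered colour coincidences, not in positional freedom for $v$, and designing this is precisely the work you have deferred.

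The paper's proof is also not a counting or moment argument. It shows directly that for each fixed $(v,c,V',\cols')$ an absorber exists with failure probability $e^{-\Omega(\lambda n)}$, and then union-bounds over the $n^2\cdot e^{O(\gadgetb\log(1/\gadgetb)\,n)}$ choices of $(v,c,V',\cols')$. The exponential tail is obtained as follows: \Cref{lemma:con} gives $\Omega(n)$ pairwise colour-disjoint rainbow triangles at $v$, hence a linear set $\cols_0$ of candidate colour triples; the technical core, \Cref{lemma:K24}, then uses the digraph regularity lemma to locate the coloured $K_{2,4}$ --- one finds a copy of $K_{2,4}$ in the reduced digraph, passes to super-regular pairs, and reveals colours one cluster-pair at a time while extracting large monochromatic matchings (via \Cref{lemma:linear-matching}) that successively force each required colour coincidence. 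Your moment route is not evidently doomed, but it cannot be assessed without the gadget in hand; and note that a second-moment computation gives only Chebyshev-type polynomial tails, not the exponential ones you claim --- that may still suffice for the $n^2$ union bound over $(v,c)$ if combined with your spread argument, but it is a different statement from what you wrote.
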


	Our absorbers are depicted in \Cref{fig:absorber}.
	We describe their structure here, though it might be easier to read off the structure from the figure.
	They consist of vertices $\{v,v_1,v_2\}$ that induce an oriented triangle with edges $\{v_1v, vv_2, v_1v_2\}$, and vertices $\{x,y,z,u,w_1,w_2\}$ inducing an oriented $K_{2,4}$ with edges $\{xy,xz,yu,zu,w_2y,w_2z,yw_1,zw_1\}$, and two directed paths of length $3$ --- $P_1$ from $v_2$ to $x$ and $P_2$ from $w_1$ to $w_2$ --- whose interiors are vertex-disjoint and disjoint of previously mentioned vertices.
	The absorber is equipped with an edge colouring satisfying the following: $\cols(yu) = c$, $\cols(v_1v) = \cols(xy)$, $\cols(vv_2) = \cols(zu)$, $\cols(v_1v_2) = \cols(xz)$, $\cols(w_2y) = \cols(w_2z)$, $\cols(yw_1) = \cols(zw_1)$, and the colours of the edges of \(P_1\) and \(P_2\) are different from one another and from those on the rest of the absorber.
	The $(v,c)$-absorbing path $P$ and $(v,c)$-avoiding path $P'$ are  
	\begin{equation*}
		P = v_1vv_2P_1xzw_1P_2w_2yu,
		\qquad
		P' = v_1v_2P_1xyw_1P_2w_2zu.
	\end{equation*}

	\begin{figure}[ht] 
		\centering
		\begin{subfigure}{0.7\linewidth}
				\includegraphics[width=\linewidth]{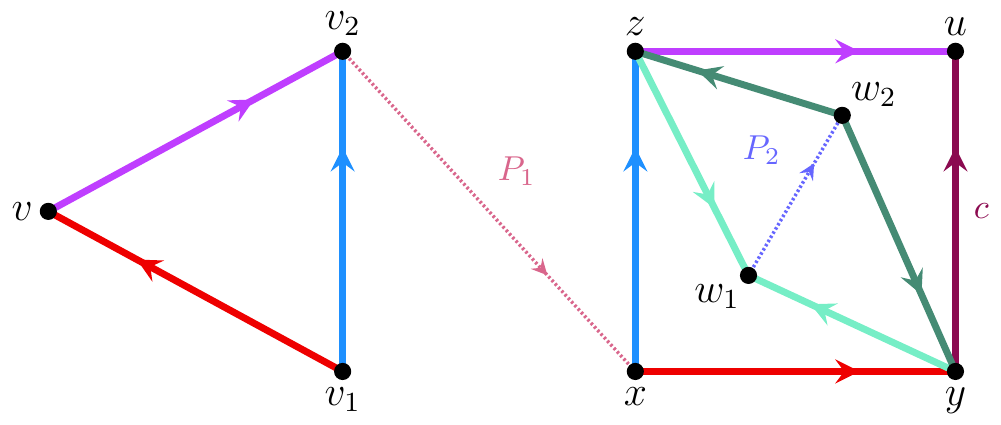}
			\end{subfigure}%
		\vskip 5pt
		\begin{subfigure}[b]{0.45\linewidth}
			\includegraphics[width=\linewidth]{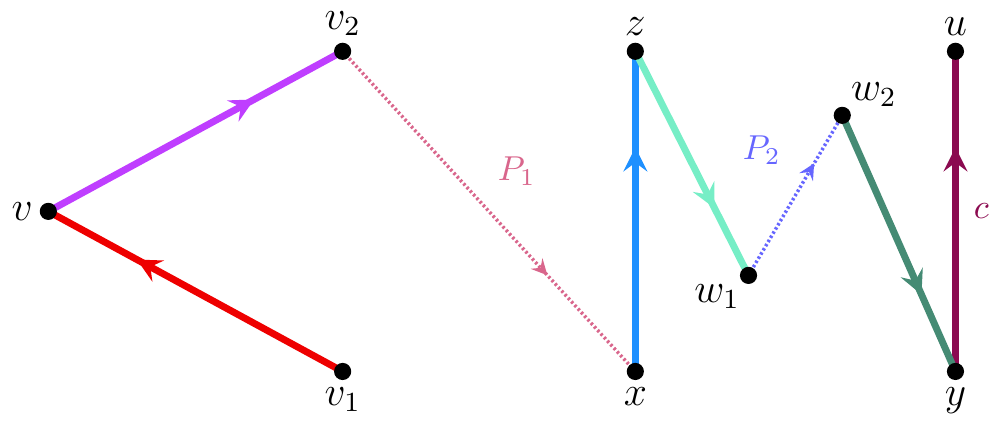}
			\end{subfigure}
		\hfill
		\begin{subfigure}[b]{0.45\linewidth}
		\includegraphics[width=\linewidth]{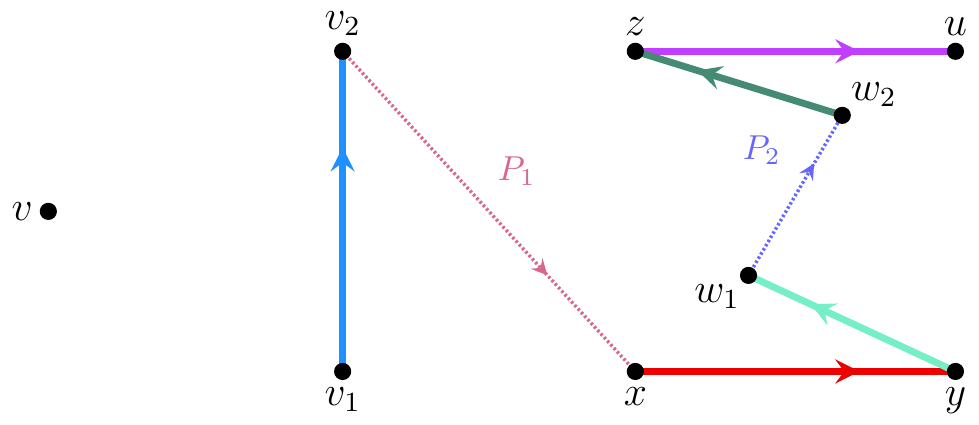}
			\end{subfigure}%
		\caption{At the top is the \((v,c)\)-absorber used in \Cref{lemma:absorbers-exist}. 
		At the bottom the first figure shows the \((v,c)\)-absorbing path and the second figure the \((v,c)\)-avoiding path. 
		The directed paths \(P_1, P_2\) have length 3 and are rainbow with colours disjoint of one another and of the other colours in the figure.}
		\label{fig:absorber}
	\end{figure}

	For the existence of the oriented triangle (on vertices $\{v,v_1,v_2\}$) and the directed paths of length three $P_1$ and $P_2$ we use \Cref{lemma:con}.
	The existence of the oriented $K_{2,4}$ (with the required colouring) is more involved and uses the regularity lemma for digraphs.
	This is the most technical part of our proof and is accomplished in \Cref{lemma:K24}.

	\subsection{Finding oriented squares with diagonal directed paths}

	The aim of this section is to prove \Cref{lemma:K24}, which allows us to find a \(K_{2,4}\) as required in \Cref{fig:absorber}.

	\subsubsection{Regularity preliminaries}
		
		The (directed) \emph{densities} of a pair of non-empty disjoint vertex sets \(V,W\) in a digraph are 
		\( d^+(V,W)  = \frac{e^+(V,W)}{\abs{V}\abs{W}}\)
		and
		\( d^-(V,W)  = \frac{e^-(V,W)}{\abs{V}\abs{W}}\).
		For simplicity $d(V,W)$ will always stand for $d^+(V,W)$.
		
		\begin{definition}
			An ordered pair of disjoint vertex sets \((V,W)\) is \emph{\(\eps\)-regular} if for any \(V' \subseteq V\), \(W' \subseteq W\) with
		   \(\abs{V'} \ge \eps \abs{V}, \abs{W'} \ge \eps \abs{W} \)
		   we have
		   \(\abs{d(V',W') - d(V,W)} < \eps\).
		\end{definition}

		Since we need regular pairs to be sufficiently dense, we will find the following definition useful.
		\begin{definition} \label{def:regularity}
		   An ordered pair of disjoint vertex sets \((V,W)\) is 
		   \emph{\((\eps,\rho)\)-super-regular} if it is \(\eps\)-regular and for every \(v\in V\), \(\deg^+(v,W) \ge \rho \abs{W}\) and for every \(w\in W\), \(\deg^-(w,V) \ge \rho \abs{V}\).
		\end{definition}

		We state without proof the following straightforward consequences of \Cref{def:regularity}.

		\begin{lemma} \label{lemma:reg:few-bad}
			 If \((V,W)\) is \(\eps\)-regular, then for all but at most \(\eps \abs{V}\) vertices  \(v \in V\) we have \(\deg^+ (v, W) \ge (d^+(V,W)-\eps) \abs{W}\), and for all but at most \(\eps \abs{W}\) vertices \(w\in W\) we have \(\deg^- (w, V) \ge (d^-(V,W)-\eps) \abs{V}\).
		\end{lemma}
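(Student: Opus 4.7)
The proof is a direct contradiction argument from the definition of $\eps$-regularity, applied symmetrically in the two parts. For the out-degree half, I would let
\[
V' = \{v \in V : \deg^+(v, W) < (d^+(V,W) - \eps)\abs{W}\}
\]
and suppose for contradiction that $\abs{V'} > \eps \abs{V}$. Then $V'$ and $W' = W$ satisfy the size conditions in the definition of $\eps$-regularity of $(V,W)$ (since $\abs{W} \ge \eps \abs{W}$ trivially), so $\abs{d^+(V', W) - d^+(V, W)} < \eps$, i.e.\ $d^+(V', W) > d^+(V, W) - \eps$. On the other hand, summing the defining inequality of $V'$ yields
\[
d^+(V', W) \;=\; \frac{1}{\abs{V'}\abs{W}}\sum_{v \in V'} \deg^+(v, W) \;<\; d^+(V, W) - \eps,
\]
a contradiction. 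Hence $\abs{V'} \le \eps \abs{V}$, as claimed.

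The in-degree statement follows by the identical argument with orientations reversed: I would let $W'$ collect those $w \in W$ with $\deg^-(w, V) < (d^-(V, W) - \eps)\abs{V}$, suppose $\abs{W'} > \eps \abs{W}$, and apply the analogous regularity estimate to the pair $(V, W')$ to derive $d^-(V, W') > d^-(V, W) - \eps$, which contradicts the averaging bound $d^-(V, W') < d^-(V, W) - \eps$ obtained from the definition of $W'$. The regularity estimate needed here is either read off from a two-sided regularity hypothesis or, equivalently, is obtained by running the first half of the proof in the reverse digraph, under which $d^+$ and $d^-$ swap roles while $\eps$-regularity is preserved. Since the lemma is a routine unpacking of the definition of an $\eps$-regular pair, I do not anticipate any real obstacle; the only minor subtlety is the bookkeeping between the two orientations.
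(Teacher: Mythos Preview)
The paper does not prove this lemma (it is explicitly stated ``without proof'' as a straightforward consequence of \Cref{def:regularity}), so there is no argument to compare against; your contradiction argument for the out-degree half is the standard one and is correct.

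For the in-degree half, the fix is simpler than either of your suggested routes. Observe that $\sum_{w\in W'}\deg^-(w,V)=e^+(V,W')$, so the averaging bound you write actually yields $d^+(V,W')<d^+(V,W)-\eps$, and this is precisely the quantity that $\eps$-regularity of $(V,W)$ controls when you take $V'=V$ and the bad set $W'\subseteq W$. No reversal of the digraph and no two-sided regularity hypothesis is needed. (Relatedly, the average of $\deg^-(w,V)$ over $w\in W$ equals $d^+(V,W)\,|V|$, so the ``$d^-(V,W)$'' in the printed statement is a slip for $d^+(V,W)$; with that reading your argument goes through verbatim.)
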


		\begin{lemma} \label{lemma:reg:edge-sampling}
			Let \(1/n\ll \eps,\rho, p\).
			Let \((V,W)\) be \((\eps,\rho)\)-super-regular with $|V| = |W| = n$. 
			Sample each edge independently with probability \(p\).
			Then, with probability at least \(1-\exp(-\Omega(p\rho n))\), the resulting pair is \((\eps,p \rho/2)\)-super-regular.
		\end{lemma}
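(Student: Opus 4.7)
The plan is to verify the two defining conditions of $(\eps, p\rho/2)$-super-regularity for the sampled pair separately, each via a Chernoff bound (\Cref{thm:Chernoff}) combined with an appropriate union bound. Throughout, let $d_p$, $\deg_p^{\pm}$, $e_p$ denote the densities, degrees, and edge counts in the sampled pair.

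For the minimum-degree condition, I fix $v \in V$ and note that the sampled out-degree $\deg_p^+(v, W)$ is distributed as $\mathrm{Bin}(\deg^+(v, W), p)$, so has mean at least $p \rho n$ by super-regularity of the original pair. Chernoff gives $\deg_p^+(v, W) \ge p\rho n/2$ with failure probability $\exp(-\Omega(p\rho n))$, and a union bound over all $2n$ vertices (using $\deg^-$ symmetrically for $w \in W$) concludes this step within the claimed probability bound.

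For $\eps$-regularity, I need to show that every pair $(V', W')$ with $|V'|, |W'| \ge \eps n$ satisfies $|d_p(V', W') - d_p(V, W)| < \eps$. Setting $e = e(V', W')$, the sampled count is $e_p(V', W') \sim \mathrm{Bin}(e, p)$. Super-regularity forces $d(V, W) \ge \rho$ (by summing min-degrees), so $\eps$-regularity gives $d(V', W') \ge \rho - \eps$, whence $e = \Omega(\eps^2 n^2)$ (implicitly using $\eps < \rho$, the only non-trivial regime). Chernoff then yields, for a small $\tau > 0$, that $|d_p(V', W') - p\, d(V', W')| \le \tau$ fails with probability $\exp(-\Omega(\tau^2 n^2/p))$; taking $\tau = (1-p)\eps/3$ and union-bounding over the at most $4^n$ qualifying pairs gives total failure probability $\exp(-\Omega(n^2))$, comfortably within $\exp(-\Omega(p\rho n))$. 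Applying the estimate to both $(V', W')$ and $(V, W)$ and using the decomposition
\[
d_p(V', W') - d_p(V, W) = \big[d_p(V', W') - p\, d(V', W')\big] + p\big[d(V', W') - d(V, W)\big] - \big[d_p(V, W) - p\, d(V, W)\big],
\]
together with the original $\eps$-regularity bound $|d(V', W') - d(V, W)| < \eps$ on the middle term, then gives $|d_p(V', W') - d_p(V, W)| \le 2\tau + p\eps < \eps$, as required.

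The main (mild) obstacle is calibrating the Chernoff deviation $\tau$ so that the three accumulated errors sum to strictly less than $\eps$; this is routine bookkeeping provided $p$ is bounded away from $1$ (the regime of the applications in this paper, and at $p = 1$ no sampling occurs so the claim reduces to the hypothesis). Everything else is a direct combination of Chernoff and the union bound.
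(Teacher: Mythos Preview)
The paper does not prove this lemma; it is stated without proof as one of two ``straightforward consequences'' of the definition of super-regularity. Your argument is a correct and standard way to supply the details: Chernoff plus a union bound over vertices handles the minimum-degree condition with exactly the stated failure probability, and Chernoff plus a union bound over the $\le 4^n$ relevant subpairs handles $\eps$-regularity with room to spare. Your decomposition of $d_p(V',W')-d_p(V,W)$ into two sampling-error terms and one deterministic term controlled by the original $\eps$-regularity is the natural approach, and your choice $\tau=(1-p)\eps/3$ makes the three errors sum to $(2+p)\eps/3<\eps$.

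One small remark: your use of the density lower bound $d(V',W')\ge \rho-\eps$ to get $e=\Omega(\eps^2 n^2)$ is not strictly needed. Even without it, since $e_p(V',W')$ is a sum of at most $|V'||W'|$ independent $\{0,1\}$ variables, Hoeffding's inequality gives $\mathbb{P}\bigl[|e_p-pe|\ge \tau|V'||W'|\bigr]\le 2\exp(-2\tau^2|V'||W'|)\le 2\exp(-2\tau^2\eps^2 n^2)$ unconditionally, which cleans up the case distinction on whether $\eps<\rho$. Your caveat that the argument assumes $p$ bounded away from $1$ (with $p=1$ trivial) is appropriate and matches the only application in the paper, where $p=1/4$.
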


		We use the following degree form of the regularity lemma for digraphs, from \cite{kuhn-osthus-diregularity-stmt}. The original version of Szemer\'edi's regularity lemma~\cite{szemeredi} for digraphs was first proved by Alon and Shapira~\cite{alon-shapira-di-regularity}.
        The version we use is Lemma 7 in~\cite{kuhn-osthus-diregularity-stmt}.

		\begin{theorem}[Degree form of the regularity lemma for digraphs] \label{lemma:regularity}
			For every \(\eps \in (0,1)\) and every integer \(M'\), there are integers \(M\) and \(n_0\) such that the following holds.
			If \(D\) is a digraph on \(n \ge n_0\) vertices and \(d \in [0,1]\) is any real number, then there is a partition \((V_0,\hdots, V_k)\) of \(V(D)\), a spanning subgraph \(D'\) of \(D\) and a set \(U\) of ordered pairs \((i,j)\),  \(1\le i,j \le k\) and \(i\neq j\), such that the following  hold:
			\begin{itemize}
				\item \(M' \le k \le M\);
				\item \(\abs{V_0} \le \eps n\);
				\item \(\abs{V_1} = \cdots = \abs{V_k}\);
				\item for all  \(x\in V(D)\),
                \(\deg_{D'}^+(x) > \deg_D^+(x) - (d+\eps)n\)
                and
                \(\deg_{D'}^-(x) > \deg_D^-(x) - (d+\eps)n\);
				\item \(\abs{U} \le \eps k^2\);
				\item for every ordered pair \((i,j) \notin U\) with \(i\neq j\) the bipartite digraph \(D[V_i, V_j]\) is \(\eps\)-regular;
				\item \(D'\) is obtained from \(D\) by deleting the following edges of \(D\): all edges with both endpoints in \(V_i\), for all \(i\in [k]\); all edges \(E_D(V_i, V_j)\) for all \((i,j) \in U\); and all edges \(E_D(V_i, V_j)\) for all \((i,j) \notin U\), \(i\neq j\), with \(d_D(V_i,V_j) < d\).
			\end{itemize}
		\end{theorem}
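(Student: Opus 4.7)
The plan is to deduce this degree form from the standard (non-degree) Szemerédi regularity lemma for digraphs of Alon and Shapira by a cleaning argument. I would apply the standard version with a smaller regularity parameter $\eps_0 \ll \eps, d$ and with minimum number of parts $M'_0 = \max\{M', \lceil 20/\eps \rceil\}$, obtaining an equipartition $V^\ast_0, V^\ast_1, \ldots, V^\ast_k$ with $|V^\ast_0| \le \eps_0 n$, common class size $m \le \eps n/20$, and a set $U$ of at most $\eps_0 k^2$ irregular ordered pairs. Tentatively define $D'$ by removing intra-class edges, edges in pairs from $U$, and edges in regular pairs of directed density less than $d$, exactly as required by the theorem.

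The main content is verifying the per-vertex degree bound via cleaning, by identifying two kinds of bad vertices to be absorbed into the exceptional set $V_0$. First, an index $i$ is \emph{bad} if $|\{j : (i,j) \in U \text{ or } (j,i) \in U\}| > \sqrt{\eps_0}\,k$; by Markov's inequality applied to $|U| \le \eps_0 k^2$, at most $O(\sqrt{\eps_0}\, k)$ indices are bad, and moving the corresponding classes into $V_0$ costs at most $O(\sqrt{\eps_0}\, n)$ vertices. Second, for each regular pair $(V^\ast_i, V^\ast_j)$ with $d(V^\ast_i, V^\ast_j) < d$, \Cref{lemma:reg:few-bad} bounds by $\eps_0 m$ the number of vertices of $V^\ast_i$ with $\deg^+(x, V^\ast_j) > (d + \eps_0) m$; I would call $x \in V^\ast_i$ \emph{bad} if it is exceptional in this sense for more than $\eps k/10$ values of $j$ (combining the analogous counts for in- and out-directions). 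A double-counting argument gives at most $O(\eps_0 k m)$ exceptional incidences per class, so at most $O(\eps_0 m / \eps)$ bad vertices per class and $O(\eps_0 n / \eps)$ overall. With $\eps_0 \le \eps^2 / 100$, these contributions together with $|V^\ast_0|$ sum to at most $\eps n / 2$, and trimming all remaining classes to the smallest remaining size adds at most $O(\eps_0 n / \eps)$ further vertices to $V_0$, keeping $|V_0| \le \eps n$.

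For a retained $x \in V^\ast_i$, the deleted out-edges at $x$ break into at most $|V_0|$ edges to the exceptional class, at most $m$ intra-class edges, at most $\sqrt{\eps_0}\, n$ edges across irregular pairs (since $i$ is not a bad index), and at most $(d + \eps_0) n + (\eps k/10)\cdot m$ edges across sparse regular pairs (since $x$ is exceptional for at most $\eps k/10$ values of $j$, contributing at most $m$ per such $j$ and at most $(d + \eps_0)m$ per remaining $j$). With $\eps_0$ and $m$ chosen as above, this sums to at most $(d + \eps)n$; the symmetric argument handles in-edges. The other bullets of the theorem follow immediately: $k \in [M', M]$ is inherited from Alon–Shapira, irregular pairs form a subset of $U$ with $|U| \le \eps_0 k^2 \le \eps k^2$, and any $\eps_0$-regular pair is also $\eps$-regular since $\eps_0 \le \eps$. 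The main obstacle is the apparent circular dependence between $\eps_0$ and $k = k(\eps_0, M'_0)$ in the cleaning thresholds, but this is resolved cleanly by fixing $\eps_0$ as a function of $\eps$ alone (e.g.\ $\eps_0 = \eps^2 / 100$) before invoking Alon–Shapira, so that $k \le M$ is determined a posteriori and the final $M$ in the statement is $M(\eps_0, M'_0)$.
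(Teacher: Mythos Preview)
The paper does not prove \Cref{lemma:regularity}; it is quoted verbatim from K\"uhn and Osthus (Lemma~7 in \cite{kuhn-osthus-diregularity-stmt}), who in turn derive it from the Alon--Shapira digraph regularity lemma. So there is no in-paper proof to compare against.

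Your proposal is the standard derivation of the degree form from the non-degree form, and the outline is correct. Two points deserve a bit more care. First, after you move bad classes and bad vertices into $V_0$ and trim, the partition changes, and the subgraph $D'$ in the statement is defined with respect to the \emph{final} partition: the set of sparse pairs (those with density below $d$) may shift slightly, and your exceptional-vertex accounting was done relative to the original classes. This is easily repaired---e.g., call a vertex bad if it is exceptional in more than $\eps k/10$ regular pairs of original density below $d+\eps/10$, so that any pair that becomes sparse after trimming was already covered---but the sketch as written does not address it. Second, moving $O(\sqrt{\eps_0}\,k)$ whole classes to $V_0$ decreases $k$, so to retain $k\ge M'$ you should take $M'_0$ a constant factor larger than $M'$ (say $M'_0=\max\{2M',\lceil 20/\eps\rceil\}$), not just $\max\{M',\lceil 20/\eps\rceil\}$. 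With these two tweaks your argument goes through with $\eps_0=\eps^2/100$ as you propose.
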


		We call $V_1, \dots, V_k$ the \emph{clusters} of the partition and $V_0$ the \emph{exceptional set}. 
		Note that the last two conditions of the lemma imply that for all $1 \le i, j \le k$ with $i \neq j$, the bipartite
		graph $D'[V_i, V_j]$ is $\eps$-regular and has density either $0$ or at least $d$.

	\subsubsection{Finding gadgets}
		In the following lemma we show that, if $(A,B)$ and $(C,B)$ are two dense regular pairs in a digraph whose edges are uniformly coloured, and given a dense bipartite graph $H$ with bipartition $(A,C)$, with very high probability there is a dense subgraph $H' \subseteq H$, such that for every edge $ac$ in $H'$ there is an oriented path of length $2$ from $a$ to $c$ through $B$ which is monochromatic (and of course both edges are directed outwards from \(a\) and \(c\)).

		By applying this lemma twice (reversing edges for the second application, see the proof of \Cref{lemma:K24}), given four dense regular pairs $(W_2,Y), (W_2,Z), (Y,W_1), (Z,W_1)$ in a digraph whose edges are uniformly coloured, with very high probability we can find $(y,z,w_1,w_2) \in Y \times Z \times W_1 \times W_2$ such that $\cols(w_2y) = \cols(w_2z) \neq \cols(yw_1) = \cols(zw_1)$,\footnote{In fact, we can find quadratically many such quadruples, with distinct pairs $(y,z)$.} in line with the edges spanned by $\{y,z,w_1,w_2\}$ in our gadgets.

		\begin{lemma} \label{lemma:ABC}
			Let $1/n \ll \gamma, \eps, \lambda  \ll \alpha, \rho, 1/q$. 
			Let $D$ be a digraph, and let $A, B, C$ be disjoint sets of vertices in $D$ of size $n$, such that $(A,B), (C,B)$ are $\eps$-regular and have density at least $\rho$.
			Let $H$ be a digraph with $E(H) \subseteq A \times C$ and $e(H) \ge \alpha n^2$.
			Assign each edge in $D$ a colour in $\cols = [qn]$ uniformly and independently.
			Then, 
            with probability at least $1 - \exp(-\lambda n)$,
            the number of edges $ac$ in $H$ for which there exists $b \in B$ such that $ab,cb \in E(D)$ and $\cols(ab) = \cols(cb)$, is at least 
            $\gamma n^2$.
		\end{lemma}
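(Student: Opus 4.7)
The plan is to first use regularity to show that for most edges \(ac \in E(H)\) there are linearly many \(b \in B\) with \(ab, cb \in E(D)\), so each such edge independently has constant probability of satisfying \(\cols(ab) = \cols(cb)\); I then handle dependencies between different edges of \(H\) by decomposing them into matchings on which the indicator events become independent, and apply the Chernoff bound inside each matching followed by a union bound.

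For each \(ac \in E(H)\), let \(I_{ac}\) denote the indicator that some \(b \in B\) witnesses a monochromatic pair on \((ab, cb)\); the goal is to lower-bound \(Z := \sum_{ac \in E(H)} I_{ac}\) by \(\gamma n^2\). Call \(a \in A\) \emph{typical} if \(\deg^+(a, B) \ge (\rho - \eps)n\), and similarly for \(c \in C\); \Cref{lemma:reg:few-bad} gives at most \(\eps n\) atypical vertices in each of \(A\) and \(C\). For a typical \(c\), writing \(B_c := N^+(c, B)\), we have \(|B_c| \ge (\rho - \eps)n \ge \eps n\), so \(\eps\)-regularity of \((A, B)\) implies that at most \(\eps n\) vertices \(a \in A\) satisfy \(\deg^+(a, B_c) < (\rho - 2\eps)|B_c|\); otherwise the set \(A_{\mathrm{bad}}\) of such vertices would satisfy \(d(A_{\mathrm{bad}}, B_c) \le \rho - 2\eps\), contradicting the regularity bound \(d(A_{\mathrm{bad}}, B_c) > \rho - \eps\). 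Hence for all but \(O(\eps n^2)\) pairs \((a, c) \in A \times C\) we have \(|N^+(a, B) \cap N^+(c, B)| \ge \rho^2 n / 4\); call such pairs \emph{good} and let \(U\) be the set of good pairs lying in \(E(H)\), so \(|U| \ge \alpha n^2 / 2\). For each good pair, the events \(\{\cols(ab) = \cols(cb)\}\) over \(b \in N^+(a, B) \cap N^+(c, B)\) are independent (they involve disjoint edges of \(D\)) and each has probability \(1/(qn)\), so
\[
\prob{I_{ac} = 1} \ge 1 - \left(1 - \frac{1}{qn}\right)^{\rho^2 n/4} \ge 1 - \exp\!\left(-\frac{\rho^2}{4q}\right) =: \theta,
\]
a positive constant depending only on \(\rho\) and \(q\).

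The main obstacle is concentration, since \(I_{ac}\) and \(I_{a'c'}\) share colour variables whenever \(a = a'\) or \(c = c'\). Viewing \(U\) as the edges of a bipartite graph with parts \(A\) and \(C\), its maximum degree is at most \(n\), so by König's edge-colouring theorem it decomposes into at most \(n\) matchings \(M_1, \ldots, M_k\). Within any single matching \(M_i\), distinct pairs are vertex-disjoint on both sides, so the corresponding indicators depend on pairwise disjoint colour random variables and are mutually independent. Discarding matchings of size less than \(\alpha n / 4\) removes at most \(n \cdot \alpha n / 4 \le |U| / 2\) pairs, so the collection \(\mathcal{L}\) of remaining matchings still covers at least \(|U|/2\) pairs. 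The Chernoff bound (\Cref{thm:Chernoff}) gives \(\sum_{(a,c) \in M_i} I_{ac} \ge \theta |M_i| / 2\) with probability at least \(1 - \exp(-c_0 \theta \alpha n)\) for an absolute constant \(c_0 > 0\), and a union bound over the \(\le n\) matchings in \(\mathcal{L}\) yields failure probability at most \(\exp(-\lambda n)\) provided \(\lambda \ll \theta \alpha\), which follows from the hierarchy \(\lambda \ll \alpha, \rho, 1/q\). On the resulting good event,
\[
Z \ge \sum_{M_i \in \mathcal{L}} \sum_{(a,c) \in M_i} I_{ac} \ge \frac{\theta}{2} \sum_{M_i \in \mathcal{L}} |M_i| \ge \frac{\theta |U|}{4} \ge \frac{\theta \alpha}{8}\, n^2 \ge \gamma n^2,
\]
where the final inequality uses \(\gamma \ll \alpha, \rho, 1/q\). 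This completes the argument.
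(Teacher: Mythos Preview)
Your proof is correct and follows essentially the same strategy as the paper: restrict to edges of $H$ with a large common out-neighbourhood in $B$ via regularity, decompose these edges into large matchings so that the indicator events become independent within each matching, and apply Chernoff plus a union bound. The only cosmetic difference is that you obtain the matchings by K\"onig's theorem and discard the small ones, whereas the paper greedily extracts a maximal collection of large edge-disjoint matchings; the resulting estimates are identical.
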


		\begin{proof}
			Let $\beta$ be a constant satisfying $\gamma, \lambda \ll \beta \ll \alpha, \rho, 1/q$.

			Let $H_1$ be the subgraph of $H$ spanned by edges $ac$ such that $|\Np(a) \cap \Np(c) \cap B| \ge \rho^2 n / 2$.
			We claim that $e(H_1) \ge \alpha n^2/2$.
			Indeed, let $A' = \{a \in A : |\Np(a) \cap B| \ge (\rho - \eps)n\}$. Then $|A'| \ge (1 - \eps)n$ by $\eps$-regularity. Given $a \in A'$, we have that $|\Np(a) \cap \Np(c) \cap B| \ge (\rho - \eps)^2n \ge \rho^2 n / 2$ for all but at most $\eps n$ vertices $c \in C$, using $\eps \ll \rho$ for the last inequality. Thus $e(H_1) \ge e(H) - |A \setminus A'| \cdot n - |A'| \cdot \eps n \ge e(H) - 2\eps n^2 \ge \alpha n^2 / 2$, using $\eps \ll \alpha$.

			Let $M_1, \ldots, M_k$ be a maximal collection of pairwise edge-disjoint matchings in $H_1$ of size at least $\alpha n / 4$. We claim that $k \ge \alpha n / 4$.
			Indeed, otherwise, since $|M_i| \le n$ for every $i$, we have $e(H_1 \setminus (M_1 \cup \ldots \cup M_k)) \ge \alpha n^2 / 2 - \alpha n^2 / 4 = \alpha n^2 / 4$. But the graph $H_1 \setminus (M_1 \cup \ldots \cup M_k)$ can be decomposed into at most $n$ matchings, showing that there is a matching $M_{k+1}$ in $H \setminus (M_1 \cup \ldots \cup M_k)$ of size at least $\alpha n / 4$, a contradiction.

			Let $M_i'$ be the (random) submatching $\{ac \in M_i : \text{there is $b \in B$ such that $\cols(ab) = \cols(cb)$}\}$.
			For $ac \in M_i$, we have 
			\begin{align*}
				\prob{ac \in M_i'} 
				& = 1 - \left(1 - \frac{1}{qn}\right)^{|\Np(a) \,\cap\, \Np(c) \,\cap\, B|} \\
				& \ge 1 - \exp\left(-\frac{|\Np(a) \cap \Np(c) \cap B|}{qn}\right) \\
				& \ge 1 - \exp\left(-\frac{\rho^2}{2q}\right)
                = 8 \alpha^{-1} \beta.
			\end{align*}
            Thus, $\expect{|M_i'|} \ge |M_i| \cdot 8 \alpha^{-1} \beta \ge 2 \beta n$.
			Because, for each $i \in [k]$, the events $\{ac \in M_i'\}$ with $ac \in M_i$ are independent, by Chernoff's bound we have
            \begin{align*}
				\prob{|M_i'| \le \beta n} \le 
				\prob{|M_i'| \le \frac{\expect{|M_i'|)}}{2}} 
				\le \exp\left(-\Omega\left(\expect{|M_i'|}\right)\right)
				\le \exp(-\Omega(\beta n)).
			\end{align*}
			By a union bound, with probability at least $1 - 2n \exp(-\Omega(\beta n)) \ge 1 - \exp(-\lambda n))$, we have $|M_i'| \ge \beta n$ for every $i \in [k]$, showing $|M_1' \cup \ldots \cup M_k'| \ge (\alpha n / 4) \cdot \beta n = \gamma n^2$. This proves the lemma.
		\end{proof}

		In the next lemma we construct the part spanned by $\{x,y,z,u\}$. Recall that we would like the colours of $zu,xz,xy$ to match the colours of the edges $vv_2,v_1v_2,v_1v$ in the oriented triangle in the absorber. To achieve this, we let $\cols_0$ be a collection of $\Omega(n)$ triples of distinct colours, which are pairwise disjoint, and ensure that the colours of $zu,xz,xy$ match the colours of one of the triples in $\cols_0$.

		\begin{lemma} \label{lemma:XYZU}
			Let $1/n \ll \lambda \ll \eps \ll \alpha,\rho,q_0 \le 1$ and let $1/n \ll \mu \le 1$.
			Let $X, Y, Z, U$ be disjoint sets of $\mu n$ vertices in a digraph $D$ such that $(X,Y), (X, Z), (Y, U), (Z, U)$ are $(\eps, \rho)$-super-regular.
			Let $H$ be a digraph with $E(H) \subseteq Y \times Z$ and $e(H) \ge \alpha \mu^2 n^2$.
			Let $\cols = [n]$, let $c \in \cols$, and let $\cols_0$ be a set of pairwise disjoint triples of colours in $\cols \setminus \{c\}$ of size at least $q_0n$.

			Assign each edge in $D$ a colour from $\cols$, uniformly and independently.
			Then, with probability at least $1 - \exp(-\lambda n)$, there is a quadruple $(x,y,z,u) \in X \times Y \times Z \times U$ with $xy, xz, yu, zu \in E(D)$ and $yz \in E(H)$, such that $(\cols(zu), \cols(xz), \cols(xy)) \in \cols_0$ and $\cols(yu) = c$.
		\end{lemma}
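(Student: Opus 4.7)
The plan is to count valid quadruples and apply Janson's inequality to the resulting collection of colour events.

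First, I restrict attention to the edges of $H$ whose endpoints share large common neighbourhoods in both $X$ and $U$. Let $X_{yz} := \Nm(y) \cap \Nm(z) \cap X$ and $U_{yz} := \Np(y) \cap \Np(z) \cap U$, and let $H_1 \subseteq E(H)$ consist of the edges $yz$ for which $|X_{yz}|, |U_{yz}| \ge \rho^2 \mu n /2$. By $(\eps, \rho)$-super-regularity combined with \Cref{lemma:reg:few-bad} applied to the pairs $(X, Y), (X, Z)$ (and symmetrically $(Y, U), (Z, U)$), for each fixed $y \in Y$ all but at most $\eps \mu n$ choices of $z \in Z$ satisfy $|X_{yz}| \ge \rho^2 \mu n /2$, and the same holds for $U_{yz}$. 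Hence at most $2 \eps \mu^2 n^2$ edges of $H$ lie outside $H_1$, so $e(H_1) \ge e(H) - 2 \eps \mu^2 n^2 \ge \alpha \mu^2 n^2/2$, using $\eps \ll \alpha$.

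Next, let $N$ be the number of valid quadruples $(x, y, z, u)$: those satisfying $yz \in H_1$, $x \in X_{yz}$, $u \in U_{yz}$, $\cols(yu) = c$, and $(\cols(zu), \cols(xz), \cols(xy)) \in \cols_0$. Since $\cols_0$ contains at least $q_0 n$ pairwise disjoint colour triples in $\cols \setminus \{c\}$, and the four constrained edges have independent uniform colours in $\cols = [n]$, a fixed candidate quadruple is valid with probability $|\cols_0|/n^4 \ge q_0/n^3$. The number of candidate quadruples is at least $e(H_1) \cdot (\rho^2 \mu n/2)^2 \ge \alpha \rho^4 \mu^4 n^4/8$, giving $\Ex[N] \ge \alpha q_0 \rho^4 \mu^4 n/8 = \Omega(n)$.

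To upgrade the first moment to $\prob{N = 0} \le \exp(-\lambda n)$, I refine each validity event into the disjoint union of $|\cols_0|$ atomic events indexed by a triple $T \in \cols_0$ (each such atomic event prescribes the four colours on the four edges of the quadruple) and apply Janson's inequality in the product colour space (colour assignments on distinct edges of $D$ are independent, and each atomic event depends on four edges). The correlation term
\[
\Delta = \sum \prob{\text{both atomic events hold}},
\]
summed over ordered pairs of distinct atomic events sharing a primitive constraint of the form $\cols(e) = c'$, is bounded by $O(1)$. The crucial observation is that the pairwise disjointness of triples in $\cols_0$ forces any two atomic events that share such a primitive constraint to use the same triple $T$. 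For instance, pairs whose shared edge is $xy$ (so $x = x', y = y'$, $z \neq z'$, $u \neq u'$) involve seven distinct primitive events, contributing at most $\mu^6 n^6 \cdot q_0 n \cdot 1/n^7 = O(\mu^6 q_0)$; the remaining edge-sharing configurations (including shared $xz$, $yu$, $zu$, and combinations thereof) are handled analogously and each contributes $O(1)$. Hence Janson's inequality yields
\[
\prob{N = 0} \le \exp\!\left(-\frac{\Ex[N]^2}{\Ex[N] + \Delta}\right) = \exp(-\Omega(n)) \le \exp(-\lambda n)
\]
for $\lambda$ small enough.

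The main obstacle is the bookkeeping required to bound $\Delta$: enumerating the possible subsets of shared edges in $\{xy, xz, yu, zu\}$ between two atomic events, and verifying that the disjointness of triples in $\cols_0$ keeps the number of compatible atomic-event pairs bounded in each case. A secondary technical point is justifying Janson's inequality in the uniform-colouring setting, rather than the standard independent-Bernoulli one; this is resolved by the refinement to atomic events, after which the product-space version of Janson (as in \cite{jlr}) applies directly, since the underlying edge colours are independent across distinct edges.
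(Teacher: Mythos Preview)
Your approach has two gaps.

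First, the claim $\Delta = O(1)$ is wrong. When two atomic events share only the edge $yu$, the shared primitive constraint is $\cols(yu)=c$, which is common to \emph{all} atomic events and therefore does not force $T=T'$. Counting such pairs gives a contribution of order $n$ to $\Delta$: choose $y,u,x,z,x',z'$ in $O(\mu^6 n^6)$ ways and $T,T'$ independently in $O(q_0^2 n^2)$ ways, against seven distinct edge constraints of total probability $n^{-7}$, for a contribution $\Theta(\mu^6 q_0^2 n)$. Your ``crucial observation'' applies only to the three edges whose prescribed colour comes from the triple $T$, not to the edge $yu$. This error by itself would be recoverable, since with $\Delta = O(n)$ and $\Ex[N] = \Theta(n)$ the extended Janson inequality would still yield $\exp(-\Ex[N]^2/2\Delta)=\exp(-\Omega(n))$.

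Second, and more fundamentally, Janson's inequality does not apply here. Both the standard and extended forms require events of the shape $A_i\subseteq\Gamma_p$ for a binomial random subset $\Gamma_p$, i.e.\ up-sets in a product of independent Bernoulli coordinates, and their proofs go through the Harris--FKG inequality. Your atomic events instead prescribe specific values of uniform-$[n]$ random variables; for a fixed edge $e$ the indicators $\mathbb{1}[\cols(e)=c']$ over different $c'$ are mutually exclusive rather than independent, so the atomic events cannot be realised as up-sets over independent Bernoulli coordinates. Independence of colours \emph{across} edges is not enough: it is the monotone (up-set) structure that the Janson proof actually uses. Your sentence ``the product-space version of Janson applies directly'' is exactly the unjustified step. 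I do not see a way to repair this without restructuring the argument.

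For comparison, the paper avoids both issues by exposing the colouring in stages, so that each step is a genuine Bernoulli process over distinct edges: first reveal which edges of $E(Y,U)$ receive colour $c$ and use \Cref{lemma:linear-matching} to extract a large $c$-coloured matching $M$; then, for each triple $(c_1,c_2,c_3)\in\cols_0$, extend $M$ by monochromatic matchings in colours $c_1$ and $c_3$ (again via \Cref{lemma:linear-matching}) to produce $\Omega(n)$ ``good'' edges in $E(X,Z)$; finally reveal the colours on $E(X,Z)$ and show that some good edge receives its designated colour $c_2$.
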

		\begin{proof}
			Let $\gamma, \gamma_1, \gamma_3, \lambda'$ satisfy
			\begin{equation*}
				1/n \ll \lambda \ll \lambda' \ll \gamma_3 \ll \gamma_1 \ll \gamma \ll \eps, \mu.
			\end{equation*}
			Let $D_0$ be the subgraph of $D$ consisting of edges in $D$ contained in $X \times (Y \cup Z)$ or in $(Y \cup Z) \times U$.
			We find a quadruple $(x,y,z,u)$ satisfying the requirements of the lemma, by revealing the colours of $E(D_0)$ in the order \( E(Y,U), E(Z,U), E(X,Y), E(X,Z)\).
			At each step we find a linear size monochromatic matching between the corresponding vertex sets.
			To ensure that each new matching extends nicely previously chosen matchings, we will `clean up' the graph before finding it.

			First, let $D_1$ be the subdigraph of $D_0$ obtained by removing edges $yu \in Y \times U$ with $|\Nm_{D_0}(u) \cap \Np_H(y)| \le (\alpha \rho \mu / 4)n$. 
			We claim that $e(D_1[Y,U]) \ge (\alpha\rho \mu^2 / 4) n^2$.
			Indeed, writing $Y' = \{y \in Y: \dpp_H(y) \ge (\alpha/2)|Z|\}$, observe that $|Y'| \ge (\alpha/2)|Y|$ (otherwise $e(H) \le (|Y \setminus Y'| \cdot (\alpha/2)|Z| + |Y'|\cdot |Z| < |Y| \cdot (\alpha/2)|Z| + (\alpha/2)|Y| \cdot |Z| = \alpha |Y||Z| = \alpha \mu^2 n^2$, a contradiction).
			For every $y \in Y'$, all but at most $\eps |U|$ vertices $u$ in $U$ have at least $(\rho - \eps) |\Np_H(y)| \ge (\alpha \rho \mu / 4)n$ in-neighbours in $\Np_H(y)$. This implies that $e(D_1[Y,U]) \ge e(D_0[Y',U]) - |Y'| \cdot \eps |U| \ge (\rho - 2\eps)|Y'||U| \ge (\rho/2) \cdot (\alpha/2) \cdot \mu^2 n^2 = (\alpha \rho \mu^2 / 4) n^2$.

			Reveal the colours of \(E_{D_1}(Y,U)\).
			Since $e(D_1[Y,U]) \ge (\alpha \rho \mu^2/4) n^2$, and each edge is coloured \(c\) independently with probability \(1/n\), 
			\Cref{lemma:linear-matching} yields that,
			with probability at least \(1-\exp(-\Omega(\lambda' n))\)
			, there exists a matching \(M \subseteq D_1[Y,U]\) of size at least \(\mclb n\) with all edges coloured \(c\).

			For \((c_1, c_2, c_3) \in \cols_0\), say an edge \(xz\) in \(E_{D}(X,Z)\) is \emph{good} for \((c_1, c_2, c_3)\in \cols_0\), if there exist $u \in U$ and $y \in Y$ such that $yz \in E(H)$, $xy, yu, zu \in E(D)$, 
			$\cols(xy)=c_3$, $\cols(yu)=c$, and $\cols(zu)=c_1$.
			Notice that this definition does not depend on the colours of  $E_{D}(X,Z)$.
			Observe also that if $xz$ is good for $(c_1,c_2,c_3) \in \cols_0$ and \(\cols(xz) = c_2\), then $(x,y,z,u)$ satisfies the requirements of the lemma.
			Therefore, to complete the proof it suffices to show that, with high probability, for some \((c_1, c_2, c_3)\in \cols_0\) there exists at least one good edge  \(xz\in E_{D}(X,Z)\) with \(\cols(xz) = c_2\).
			For that, the crucial argument is the following claim, where we show that, with very high probability, there are many good edges for $(c_1,c_2,c_3)$.
			We do so by `extending' \(M\) with a monochromatic matching \(M_1 \subseteq D_1[Z,U]\) in colour $c_1$, and a monochromatic matching \(M_3 \subseteq D_1[X,Y]\) in colour \(c_3\).
			
			For $(c_1,c_2,c_3) \in \cols_0$, write
			\[
				G(c_1,c_2,c_3) := \{ e\in E_{D}(X,Z): e \text{ is good for } (c_1,c_2,c_3) \}.
			\]

			\begin{claim} \label{claim:good-edges}
				Fix \((c_1,c_2,c_3) \in \cols_0\).
				With probability at least
				\(1-\exp(- \Omega(\lambda' n))\), we have
				\(
				\abs{G(c_1,c_2,c_3)} \ge \goodmiddledges n.
				\)
			\end{claim}
			\begin{proof}
				For $y \in Y \cap V(M)$, denote by $M(y)$ the neighbour of $y$ in $M$.
				Let $D_2$ be the subdigraph of $D_1$ obtained by removing all edges in $Z \times U$ except edges of the form $zM(y)$ satisfying $yz \in E(H)$ and $|\Nm(y) \cap \Nm(z) \cap X| \ge (\rho^2 \mu / 2)n$, for some $y \in Y$.

				We claim that $e(D_2[Z, U]) \ge (\alpha \rho \mu \gamma / 8)n^2$.
				Indeed, given $y \in Y \cap V(M)$, by $(\eps,\rho)$-super-regularity, we have $|\Nm_{D_1}(y) \cap X| \ge \rho|X|$, and all but at most $\eps |Z|$ vertices $z \in Z$ satisfy 
                $$|\Nm_{D_1}(z) \cap \Nm_{D_1}(y) \cap  X| \ge (\rho - \eps)|\Nm_{D_1}(y) \cap X| \ge (\rho^2/2) |X| = (\rho^2\mu/2)n.$$
				Thus, using that $|\Nm_{D_1}(M(y)) \cap \Np_H(y) \cap Z| \ge (\alpha \rho \mu / 4)n = (\alpha \rho / 4) |Z|$, by choice of $D_1$, 
				\begin{align*}
					\big|\Nm_{D_2}(M(y)) \cap Z\big| 
					& \ge \big|\Nm_{D_1}(M(y)) \cap \Np_H(y) \cap Z\big| - \eps |Z| \\
					& \ge (\alpha \rho / 4 - \eps)|Z| \ge (\alpha \rho / 8)|Z| = (\alpha \rho \mu / 8) n.
				\end{align*}
				Hence, $e(D_2[Z, U]) \ge |M| \cdot (\alpha \rho \mu / 8) n \ge (\alpha \rho \mu \gamma / 8)n^2$, as claimed.
				Reveal the colours of $E_{D_2}[Z,U]$.
				Since each edge is coloured \(c_1\) independently with probability \(1/n\), using $e(D_2[Z, U]) \ge (\alpha \rho \mu \gamma / 8)n^2$,  by \Cref{lemma:linear-matching},
				with probability at least
				\(1-\exp(-\Omega(\lambda' n))\),
				there is a matching \(M_1\) in \(D_2[Z, U]\) coloured \(c_1\) that has size at least \(\mclb_1 n\).

				Denote by $M_1(y)$ the neighbour of $M(y)$ in $M_1$ (if it exists).
				Notice that, by the definition of $D_2$, the matching $M_1$ `extends' the matching $M$ in the sense that \(V(M_1)\cap U \subseteq V(M)\), and, moreover, for every $y \in Y$ such that $M_1(y)$ exists, so does \(M(y)\), and $y M_1(y) \in E(H)$.
				
				Next, we will find a large matching \(M_3\) coloured \(c_3\) which, along with \(M_1\) and \(M\), will give us a large number of good edges for \((c_1,c_2,c_3)\).
				To this end, let \(D_3\) be the spanning subgraph of \(D_2\) obtained by 
                removing all edges \(xy\in X\times Y\) except those for which \(M_1(y)\) exists and \(xM_1(y)\)  is an edge in \(D_2\).

				We claim that $e(D_3[X,Y]) \ge (\rho^2 \mu \gamma_1 / 2)n^2$.
				Indeed, let $y \in Y$ be such that $M(y)$ and $M_1(y)$ exist. Then $|\Nm_{D_2}(y) \cap \Nm_{D_2}(M_1(y)) \cap X| \ge (\rho^2 \mu / 2)n$, by choice of $D_2$, showing $|\Nm_{D_3}(y)| \ge (\rho^2 \mu / 2)n$. Hence, $e(D_3[X, Y]) \ge |M_1| \cdot (\rho^2 \mu / 2)n \ge (\rho^2 \mu \gamma_1 / 2)n^2$.

				Reveal the random colouring of $D_3[X,Y]$.
				Since each edge of \(D_3[X,Y]\) is coloured \(c_3\) independently with probability \(1/n\),
				using \Cref{lemma:linear-matching}, with probability 
				at least
				\(1-\exp(-\Omega(\lambda' n))\),
				there exists a matching \(M_3\) in \(D_3[X,Y]\) coloured \(c_3\) that has size
				at least \(\mclb_3 n\).

				For $y \in Y$, denote by $M_3(y)$ the neighbour of $y$ in $M_3$ (if it exists).
				Notice that, by the definition of $D_3$, the matching $M_3$ `extends' the matching $M_1$ in the sense that if $M_3(y)$ exists then so does $M_1(y)$ and \(M_3(y) M_1(y) \in E(D_3)\).
				
				Let
				\[
					G_0(c_1,c_2,c_3):= \{ M_3(y) M_1(y) : y \in Y \text{ and $M_1(y), M_3(y)$ exist}\}.
				\]
				Notice that $|G_0(c_1,c_2,c_3)| = |M_3| \ge \gamma_3 n$. 
				Moreover, $M_3(y) M_1(y)$ is good for $(c_1,c_2,c_3)$, as can be seen by taking $u = M(y)$, since we have: $yM_1(y) \in E(H)$; $M_3(y)y, yM(y), M_1(y)M(y) \in E(D)$; $\cols(M_3(y)y) = c_3$, $\cols(yM(y)) = c$, $\cols(M_1(y)M(y)) = c_1$. Hence $|G(c_1,c_2,c_3)| \ge |G_0(c_1,c_2,c_3)| \ge \gamma_3n$, with probability at least $1 - \exp(-\Omega(\lambda n))$, as required for the claim.
			\end{proof}
			
			By the union bound over the $q_0 n$ triples \((c_1, c_2, c_3) \in \cols_0\),
			\Cref{claim:good-edges} implies that,
			with probability 
			at least
			\(1-\exp(-\Omega(\lambda' n))\),
			for each \((c_1, c_2, c_3) \in \cols_0\), we have
			\(\abs{G(c_1,c_2,c_3)} \ge \goodmiddledges n\).
			We finally show that, with probability at least $1 - \exp(-\Omega(\lambda' n))$, for some $(c_1,c_2,c_3) \in \cols_0$ there exists $e \in G(c_1,c_2,c_3)$ satisfying $\cols(e)=c_2$.
			
			For $e \in E_{D_3}(X,Z)$, let
			\[
				G'(e) := \{c_2 \in \cols: 
				\text{there exist \(c_1, c_3\) such that } 
				(c_1,c_2,c_3) \in \cols_0 \text{ and } e \in G(c_1, c_2, c_3)\}.
			\]
			Then, using that no two triples in \(\cols_0\) share a colour, we have
			\[
				\sum_{e \in E_{D}(X,Z)} \abs{G'(e)} = \sum_{(c_1,c_2,c_3) \in \cols_0} \abs{G(c_1,c_2,c_3)}
				\ge 
				\abs{\cols_0} \cdot\goodmiddledges n
				= q_0 \goodmiddledges  n^2.
			\]
			Now we reveal the colours of \(E_{D_3}(X,Z)\).
			For \(e\in E_{D_3}(X,Z)\), let \(A_e\) be the event that \(e\) gets a good colour, i.e.\ 
			\(\cols(e) \in G'(e)\).
			Each edge is coloured independently,
			so the events \(A_e\) are mutually independent, and it holds that
			 \(\prob{A_e} = \abs{ G'(e)} /n \).
			Hence, the probability that no event $A_e$ holds is
			\begin{align*}
				\prod_{e \in E_{D_3}(X,Z)} \left(1- \prob{A_e}\right)
				& \le \exp \left(
					-\sum_{e \in E_{D_3}(X,Z)} \prob{A_e}
					\right) \\
				& = 
				\exp \left(-\sum_{e \in E_{D_3}(X,Z)} \frac{|G'(e)|}{n}\right)
				\le 
				\exp \left( -q_0 \goodmiddledges n \right).
			\end{align*}
			Hence, with probability at least 
			\(1-\exp( -q_0 \goodmiddledges n)\), 
			at least one edge gets a good colour, i.e.\ there exists $e\in E_{D_3}(X,Z)$ such that $\cols(e) \in G'(e)$, as required for the lemma.
			Altogether, all the required events hold simultaneously with probability at least $1 - \exp(-\Omega(\lambda' n)) - \exp(-q_0 \goodmiddledges n) \ge 1 - \exp(-\lambda n)$, as required.
		\end{proof}

		Next, we show how to find a copy of $K_{2,4}$ with the orientation in \Cref{fig:absorber}.
		As above, we additionally make sure that the edges $zu,xz,xy$ are coloured according to a triple of colours from a linear set $\cols_0$ of pairwise disjoint triples of distinct colours.
        When applying \Cref{lemma:K24} to find a \((v,c)\)-absorber (cf.~\Cref{lemma:absorbers-exist} and \Cref{section:proof-main-lemma}),  \(\cols_0\) will be precisely the set of colour triples seen on a collection of rainbow triangles intersecting only on \(v\).

		\begin{lemma}[Finding a \(K_{2,4}\)] \label{lemma:K24}
			Let \(1/n \ll \lambda \ll \delta, q_0\).
			Let \(\cols=[n]\), $c \in \cols$, and 
			\(\cols_0 \subseteq \cols^3\) be a collection of colour triples that are pairwise disjoint and avoid $c$, with \(\abs{\cols_0} = q_0 n\).
			Let \(D\) be a digraph on $n$ vertices with minimum semidegree at least \(\delta n\), which is coloured uniformly in \(\cols\).

			Then, with probability at least 
			\(1-\exp(-\lambda n)\), 
			the following holds.
			There exists an oriented copy \(K\) of \(K_{2,4}\), with \(V(K) = \{x,y,z,u,w_1,w_2\} \)  and \(E(K) = \{xz,xy,yu,zu, zw_1, \allowbreak yw_1, w_2z, w_2y\} \).
			Moreover, the edges of \(K\) are coloured as follows:
			\((\cols(zu), \cols(xz), \cols(xy)) \in \cols_0\), \(c=\cols(yu)\), and \(\cols(y w_1) = \cols(zw_1) \neq \cols(w_2 y) = \cols(w_2 z)\), with \(\cols(y w_1), \cols(w_2 y)\) disjoint from \(\{c, \cols(zu), \cols(xz), \cols(xy)\}\).
		\end{lemma}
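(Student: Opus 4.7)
The plan is to use the directed regularity lemma to locate six clusters forming a prescribed pattern, and then to combine \Cref{lemma:ABC} and \Cref{lemma:XYZU} on these clusters.

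First, apply \Cref{lemma:regularity} to $D$ with parameters $\eps \ll d \ll \delta$; the resulting reduced digraph has minimum semidegree at least $(\delta - 2d)k$, where $k$ is the number of clusters. Pick any cluster $V_{i_2}$, and choose distinct $i_1, i_5 \in N^-(i_2)$ and distinct $i_3, i_4 \in N^+(i_2)$ in the reduced digraph, which is possible since the reduced digraph has in- and out-degree at least $2$. Split $V_{i_2}$ into two halves and trim an $\eps$-fraction of atypical vertices from each cluster via \Cref{lemma:reg:few-bad}, obtaining disjoint sets $X \subseteq V_{i_1}$, $Y, Z \subseteq V_{i_2}$, $U \subseteq V_{i_3}$, $W_1 \subseteq V_{i_4}$, $W_2 \subseteq V_{i_5}$, each of size $\mu n$, such that the ordered pairs $(X,Y), (X,Z), (Y,U), (Z,U), (Y,W_1), (Z,W_1), (W_2,Y), (W_2,Z)$ are all $(\eps', d/2)$-super-regular.

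Next, observe that the colours on edges between $\{Y,Z\}$ and $\{W_1,W_2\}$ (the $K_{2,4}$ part) are independent of those on edges between $\{X,U\}$ and $\{Y,Z\}$ (the remaining gadget edges), so the two parts can be handled separately. Using only the former, apply \Cref{lemma:ABC} with $(A,B,C) = (Y, W_1, Z)$ and $H = Y \times Z$ to produce $H_1 \subseteq Y \times Z$ with $|H_1| \ge \gamma_1 n^2$, each $yz \in H_1$ having some $w_1 \in W_1$ with $\cols(yw_1) = \cols(zw_1)$; then apply \Cref{lemma:ABC} again to the digraph obtained from $D$ by reversing all edges, with $(A,B,C) = (Y, W_2, Z)$ and $H = H_1$, to refine to $H_2 \subseteq H_1$ with $|H_2| \ge \gamma_2 n^2$, each $yz \in H_2$ additionally having $w_2 \in W_2$ with $\cols(w_2 y) = \cols(w_2 z)$. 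Conditionally on $H_2$, the remaining colouring is still uniform and independent, so \Cref{lemma:XYZU} applied to $(X,Y), (X,Z), (Y,U), (Z,U)$ with input $H = H_2$, designated colour $c$, and triples $\cols_0$ yields $(x,y,z,u)$ with $xy, xz, yu, zu \in E(D)$, $yz \in H_2$, $\cols(yu) = c$, and $(\cols(zu), \cols(xz), \cols(xy)) \in \cols_0$; the witnesses $w_1, w_2$ attached to $yz$ complete the oriented $K_{2,4}$.

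The main obstacle is the final colour-disjointness requirement, namely that $\cols(yw_1), \cols(w_2y)$ both avoid the set $F := \{c, \cols(zu), \cols(xz), \cols(xy)\}$ and are distinct. To address this, I would strengthen \Cref{lemma:ABC} to allow a pre-specified forbidden set $F \subseteq \cols$ of constant size in which the witness colour $\cols(ab) = \cols(cb)$ must not lie; the proof goes through unchanged since $|F|/(qn) = o(1)$ alters the per-pair success probability only negligibly. Running this refined version once for each of the $q_0 n$ sets $F_{c_1, c_2, c_3} = \{c, c_1, c_2, c_3\}$ with $(c_1, c_2, c_3) \in \cols_0$, and union bounding (which still gives failure probability $\exp(-\Omega(n))$), yields a family $\{H_2^F\}_F$; modifying the proof of \Cref{lemma:XYZU} to use $H_2^{F_{c_1, c_2, c_3}}$ inside the argument for each triple then guarantees that the chosen $(y,z)$ has witnesses with colours outside the appropriate $F$. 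The distinctness $\cols(yw_1) \neq \cols(w_2y)$ is ensured by forbidding one additional colour in the second refined application of \Cref{lemma:ABC}.
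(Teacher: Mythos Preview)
Your structural step---taking $Y,Z$ as two halves of a single cluster rather than locating a copy of the oriented $K_{2,4}$ in the reduced digraph---is a valid simplification of what the paper does, and your observation that the colours on $E(Y\cup Z,\{W_1,W_2\})$ are independent of those on $E(\{X,U\},Y\cup Z)$ is exactly the right decoupling.

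The gap is in your final paragraph. Threading a different $H_2^{F}$ through the proof of \Cref{lemma:XYZU} for each triple is not as innocuous as you suggest. In that proof the colour-$c$ matching $M$ in $D_1[Y,U]$ is constructed \emph{once}, before the per-triple claim, and $D_1$ is defined in terms of $H$; if $H=H_2^{F}$ varies with the triple then so does $D_1$, and you would need a separate matching $M^{F}$ for each of the $q_0 n$ choices of $F$. This can be rescued (each failure probability is $\exp(-\Omega(n))$, so a union bound over $q_0 n$ triples survives), but it is genuine extra work that you elide. Separately, your distinctness fix (``forbidding one additional colour'') does not type-check: the colour you want to forbid in the second call to \Cref{lemma:ABC} is $\cols(yw_1)$, which depends on the edge $yz$, so you would need per-edge (not global) forbidden sets there.

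The paper sidesteps all of this with a single clean idea. Before invoking \Cref{lemma:ABC}, trim $\cols_0$ so that its triples cover at most $n/2-1$ colours, and fix disjoint colour sets $\cols_1,\cols_2$ of size $n/4$ avoiding both $c$ and all colours in $\cols_0$. For each edge in $E(Y\cup Z,W_1)$ reveal only whether its colour lies in $\cols_1$ and discard it otherwise; do the same for $E(W_2,Y\cup Z)$ with $\cols_2$. This is an independent $1/4$-sampling, so by \Cref{lemma:reg:edge-sampling} the relevant pairs remain super-regular, and one then runs \Cref{lemma:ABC} twice on the surviving edges. The resulting witnesses automatically satisfy $\cols(yw_1)\in\cols_1$ and $\cols(w_2y)\in\cols_2$, hence are distinct from each other and from $\{c,\cols(zu),\cols(xz),\cols(xy)\}$, with a single $H''$ fed into \Cref{lemma:XYZU} and no per-triple bookkeeping.
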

		\begin{proof}
			
			Let $M', M, \alpha', \alpha'', \lambda'$ satisfy
			\[
				\lambda \ll \lambda' \ll 1/M \ll 1/M' \ll \eps \ll \alpha'' \ll \alpha' \ll \rho \ll \delta, q_0.
			\]
			Consider a partition \( (V_0, \hdots, V_k)\), with \(M' \le k\le M\), of \(V(D)\) and a spanning subgraph \(D'\), both given by \Cref{lemma:regularity} with parameters \(\eps_{\ref{lemma:regularity}}=\eps, d_{\ref{lemma:regularity}}=\rho\) and \(M'\).
			Let \(R\) be the reduced digraph associated with this partition, that is the digraph on \([k]\) where \(ij \in E(R)\) if and only if \(e_{D'}(V_i, V_j) >0\).
			Observe that if $ij \in E(R)$ then $D'[V_i,V_j]$ is $\eps$-regular and has density at least $\rho$.
			
			\begin{claim}
			\label{claim:copy_K_{2,4}}
				The digraph \(R\) contains a copy of \(K\).
			\end{claim}
			\begin{proof}
				We observe that the minimum semidegree of $R$ satisfies $\delta^+(R) \ge \delta k/2$ because, otherwise, there would be vertices with out-degree at most $(\delta k / 2) \cdot (n/k) < \delta n - (\rho+\eps)n$ in $D'$, contradicting that \(\deg_{D'}^+(x) > \deg_D^+(x) - (\rho+\eps)n\) for all  \(x\in V(D)\).

				We will use, twice, the following assertion.
				\begin{enumerate}[label = (\ding{96})]
					\item \label{itm:fact}
						If $U$ is a set of at least $4/\delta$ vertices in $R$, then there are two distinct vertices $u_1, u_2 \in U$ such that $|\Np(u_1) \cap \Np(u_2)| \ge \frac{\delta^2 k}{10}$.
				\end{enumerate}
				First, let us prove \ref{itm:fact}. Suppose that $U$ is a set contradicting \ref{itm:fact}. Write $\ell = |U|$, and note that we may assume $\ell = \ceil{4/\delta}$, so $4/\delta \le \ell \le 5/\delta$. 
				Then,
				\begin{align*}
					k \ge \left|\bigcup_{u \in U}\Np(u)\right|
					& \ge \sum_{u \in U}|\Np(u)| - \sum_{u, v \in U \text{ distinct}} |\Np(u) \cap \Np(v)| \\
					& \ge \ell \cdot \frac{\delta k}{2} - \binom{\ell}{2} \frac{\delta^2 k}{10} \\
					& = \frac{\ell \delta k}{2}\left(1 - \frac{(\ell-1)\delta}{10}\right)
					> k,
				\end{align*}
				a contradiction.

				Now, using \ref{itm:fact} (with $U = V(R)$), we can find two distinct vertices $x,w_2$ with $|\Np(x) \cap \Np(w_2)| \ge \delta^2k/10 \ge 4/\delta$. Apply \ref{itm:fact} again, with $U = \Np(x) \cap \Np(w_2)$ to find two distinct out-neighbours of both $x$ and $w_2$, denoted $y,z$, such that $|\Np(y) \cap \Np(z)| \ge \delta^2k/10 \ge 4$. Now pick distinct $u,w_1 \in (\Np(y) \cap \Np(z)) \setminus \{x,w_2\}$. Then $\{x,y,z,u,w_1,w_2\}$ spans a copy of $K$, as required.
			\end{proof}
			Let \(X,Y,U,Z,W_1,W_2\subseteq V(D)\) be the vertex clusters corresponding to the vertices of the copy of \(K\) in \(R\) given by \Cref{claim:copy_K_{2,4}}.
			Using \Cref{lemma:reg:few-bad}, by removing up to \(O(\eps n)\) vertices from each cluster, we can ensure that $(X,Y)$, $(X,Z)$, $(W_1, Y)$, $(W_1, Z)$, $(Y, U)$, $(Z,U)$, $(Y, W_1)$, $(Z,W_1)$ are \((\eps, \rho/2)\)-super-regular and that $X,Y,Z,U,W_1,W_2$ all have the same size, which we denote by $n'$.
			Thus, without loss of generality, we assume this is the case for the remainder of the proof.
			
			Delete some triples from \(\cols_0\) if necessary so that $\cols_0$ covers at most $n/2-1$ colours, and let $\cols_1$ and $\cols_2$ be disjoint sets of $n/4$ colours that do not appear in triples in $\cols_0$ and are not $c$.
			For each edge of \(E(Y \cup Z, W_1)\), and for these edges only,
			reveal if its colour belongs to \(\cols_1\), and delete if it does not.
			Similarly, for each edge of $E(W_2, Y \cup Z)$, and for these edges only, reveal if its colour belongs to $\cols_2$, and delete it if not.
			This amounts to sampling each edge in $E(Y \cup Z, W_1) \cup E(W_2, Y \cup Z)$ with probability \(1/4\).
			Hence, by \Cref{lemma:reg:edge-sampling}, with probability at least $1 - \exp(-\Omega(\rho n))$,
			the remaining edges in $E(Y \cup Z, W_1) \cup E(W_2, Y \cup Z)$
			span \((\eps,\rho/16)\)-super-regular pairs.
			
			Now reveal the colours of the remaining edges in $E(Y \cup Z, W_1)$, which are chosen uniformly from $\cols_1$. By \Cref{lemma:ABC}, with $(n,\eps,\alpha,\rho,q,A,B,C)_{\ref{lemma:ABC}} = (n',\eps,1,\rho/16,n/(4n'), Y, W_1, Z)$ and $H$ with $E(H) = Y \times Z$, with probability at least $1 - \exp(-\lambda' n)$, the following holds: there is a subgraph $H' \subseteq H$ with $e(H') \ge \alpha' (n')^2$, such that for every edge $yz \in E(H')$, there is a vertex $w_1 \in W_1$ such that $\cols(yw_1) = \cols(zw_1)$. 
			Next, reveal the colours of the remaining edges in $E(W_2, Y \cup Z)$; they are coloured uniformly in $\cols_1$. By the same lemma, with $(n,\eps,\alpha,\rho,q,A,B,C,H)_{\ref{lemma:ABC}} = (n',\eps,\alpha',\rho/16,n/(4n'),Y,W_2,Z,H')$, with probability at least $1 - \exp(-\lambda' n)$, the following holds: there is a subgraph $H'' \subseteq H'$ with $e(H'') \ge \alpha''(n')^2$, such that for every edge $yz \in E(H'')$ there exists $w_2 \in W_2$ such that $\cols(w_2y) = \cols(w_2z)$.\footnote{Technically, here we apply the lemma to the digraph obtained by reversing the direction of edges in $E(W_2, Y \cup Z)$.}

			Finally, by \Cref{lemma:XYZU}, applied with $(\eps,\alpha,\rho,\mu,H)_{\ref{lemma:XYZU}} = (\eps,\alpha'',\rho/16,n'/n,H'')$, with probability at least $1 - \exp(-\lambda' n)$, there is a quadruple $(x,y,z,u) \in X \times Y \times Z \times U$ such that: $xy,xz,yu,zu$ are edges in $D$; $yz$ is an edge in $H''$; $(\cols(zu),\cols(xz),\cols(xy)) \in \cols_0$; and $\cols(yu) = c$. By choice of $H''$, there are vertices $w_1 \in W_1$ and $w_2 \in W_2$ such that $yw_1,zw_1,w_2u,w_2z$ are edges and $\cols(zw_1) = \cols(yw_1) \in \cols_1$ and $\cols(w_2z) = \cols(w_2y) \in \cols_2$.
			Altogether, all the required events hold with probability at least $1 - \exp(-\Omega(\rho n)) - \exp(-\Omega(\lambda' n)) \ge 1 - \exp(-\lambda n)$.
		\end{proof}

	\subsection{Proof of Lemma~\ref{lemma:absorbers-exist}}

	\begin{proof}[Proof of \Cref{lemma:absorbers-exist}]    
		Let $\lambda, \rho, \rho'$ satisfy \(\nu \ll \prblem, \pathb, \pathb' \ll \delta\).
		Fix \(v\in V(D),\, c\in \cols\) and 
		\(V' \subseteq V(D),\, \cols' \subseteq \cols\) of size at least $(1 - \nu)n$.

		For the next claim, it is useful to refer to \Cref{fig:absorber}.
		\begin{claim} \label{claim:sq-tr}
			With probability \(1-\exp(-\Omega(\prbsqtr n))\)
			the following holds.
			In
			\(D[V']\)
			there exist an oriented triangle \(T\) with \(V(T) = \{v,v_1,v_2\}\) and \(E(T) = \{vv_2, v_1 v, v_1 v_2\}\), and
			an oriented copy \(K\) of \(K_{2,4}\), with \(V(K) = \{x,y,z,u, w_1, w_2\} \)  and 
			\(E(K) = \{xz, xy, yu, zu, yw_1, zw_1, \allowbreak  w_2y, w_2z\} \).
			Moreover, the edges of $K$ and $T$ are coloured as follows.
			\(\cols(yu) = c\),
			\(\cols(xz) = \cols(v_1 v_2)\),
			\(\cols(xy) = \cols(v_1v)\),
			\(\cols(zu) = \cols(vv_2)\),
			$\cols(zw_1)=\cols(yw_1)$ and $\cols(w_2 z)=\cols(w_2y)$.
		\end{claim}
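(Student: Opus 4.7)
The strategy is twofold. First, I will produce a large family $\mathcal{T}$ of transitive triangles through $v$ (with $v$ as the middle vertex) whose edges, taken across the entire family, carry pairwise distinct colours; this yields a pool $\cols_0$ of candidate colour triples. Second, I will invoke \Cref{lemma:K24} on the remaining vertices to extract a copy of $K_{2,4}$ whose colouring is coordinated with one of the triangles in $\mathcal{T}$, the triple that \Cref{lemma:K24} returns then singling out the triangle $T$ matching~$K$.

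For the first step I mimic the proof of \Cref{lemma:con}, replacing the dicycle $v\to x\to y\to v$ by the transitive triangle $v_1\to v\to v_2$ together with the chord $v_1\to v_2$. Take disjoint subsets $N^-\subseteq N_{D_0}^-(v)\cap V'$ and $N^+\subseteq N_{D_0}^+(v)\cap V'$ of size at least $(\delta-\gadgetb)n/2$, which exist since $\delta^0(D_0)\ge\delta n$ and $|V'|\ge(1-\gadgetb)n$. By \Cref{lemma:linear-matching} applied to the random bipartite digraph of edges of $\dnp{n}{C/n}$ directed from $N^-$ to $N^+$, with probability $1-\exp(-\Omega(Cn))$ there is a matching of linear size; each matching edge $v_1v_2$ completes a transitive triangle on $\{v_1,v,v_2\}$ with edges $\{v_1v,vv_2,v_1v_2\}$. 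A Chernoff bound (as in \Cref{lemma:con}) shows that a constant fraction of these triangles are rainbow, and then a further application of \Cref{lemma:linear-matching} to an auxiliary $3$-uniform hypergraph on the colour set extracts a sub-family $\mathcal{T}$ of size $\pathb' n$ whose $3\pathb' n$ edges use pairwise distinct colours in total; the whole construction succeeds with probability $1-\exp(-\Omega(\pathb' n))$. Defining $\cols_0$ to be the set of triples $(\cols(vv_2),\cols(v_1v_2),\cols(v_1v))$ over $\mathcal{T}$ and discarding the at most one triple containing $c$ yields $|\cols_0|\ge\pathb' n/2$ pairwise-disjoint triples avoiding $c$.

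For the second step, set $V''=V'\setminus\bigcup_{T\in\mathcal{T}}\{v_1,v_2\}$; then $|V''|\ge(1-O(\gadgetb+\pathb'))n$ and $D[V'']$ has minimum semidegree at least $\delta n/2$. Crucially, the only edges whose colours have been revealed so far are incident to $v$ or have both endpoints in the internal vertex set of $\mathcal{T}$, so the colouring of $D[V'']$ is still uniform and independent. Apply \Cref{lemma:K24} to $D[V'']$ with the above $\cols_0$, the given colour $c$, and suitable parameters in place of $(\delta,q_0)$. With probability at least $1-\exp(-\Omega(\prblem n))$ this outputs a copy $K$ of the prescribed oriented $K_{2,4}$ on $\{x,y,z,u,w_1,w_2\}\subseteq V''$ satisfying $(\cols(zu),\cols(xz),\cols(xy))\in\cols_0$, $\cols(yu)=c$, and the matching-colour conditions on $\{yw_1,zw_1\}$ and $\{w_2y,w_2z\}$. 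The specific triple returned from $\cols_0$ then identifies a unique triangle $T\in\mathcal{T}$, and $T$ together with $K$ satisfies all the conditions of the claim; a union bound gives total failure probability at most $\exp(-\Omega(\prblem n))$.

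The main obstacle is coordinating colours between $T$ and $K$: one cannot first produce $K$ and then search for a matching triangle, because an arbitrary colour triple has no reason to appear on a transitive triangle through $v$. Fixing the pool $\cols_0$ in advance and letting \Cref{lemma:K24} select a triple from it circumvents this difficulty, at the cost of needing a linear-sized family of rainbow, internally vertex-disjoint triangles --- which is exactly what the Lemma~\ref{lemma:con}-style argument provides.
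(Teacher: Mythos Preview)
Your approach is essentially the paper's: build a linear family of pairwise colour-disjoint rainbow transitive triangles through $v$, feed the resulting colour triples as $\cols_0$ into \Cref{lemma:K24}, and let the triple returned by that lemma single out the triangle $T$. The only structural difference is how the two stages are decoupled. The paper takes a random bipartition $(\Vtr,\Vsq)$ of $V'$ at the outset, finds the triangles inside $\Vtr\cup\{v\}$, and applies \Cref{lemma:K24} to $D[\Vsq]$; since no edge inside $\Vsq$ has been examined, the uniform-colouring hypothesis of \Cref{lemma:K24} is automatically satisfied. You instead delete the used vertices after the fact.

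There is a small but genuine slip in your independence claim. To extract $\mathcal{T}$ you had to reveal the colours on \emph{all} triangles coming from the initial matching (first to test which are rainbow, then to run \Cref{lemma:linear-matching} on the colour hypergraph), not only on those surviving into $\mathcal{T}$. Consequently, for matching pairs $(v_1,v_2)$ that did not make it into $\mathcal{T}$, the colours of $v_1v$, $vv_2$ and $v_1v_2$ are already exposed, and these vertices still lie in your $V''$. So the assertion that ``the colouring of $D[V'']$ is still uniform and independent'' is not justified as written, and \Cref{lemma:K24} cannot be invoked directly. The fix is immediate: cap the initial matching at some size $\rho_1 n$ with $\rho'\ll\rho_1\ll\delta$ and remove \emph{all} $2\rho_1 n$ matching vertices (together with $v$) when forming $V''$; then no edge of $D[V'']$ has had its colour revealed, the semidegree bound $\delta n/2$ still holds, and the rest of your argument goes through unchanged. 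The paper's random-partition device simply sidesteps this bookkeeping.
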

		\begin{proof}
			Let \((\Vtr, \Vsq)\) be a random partition of \(V'\).
			Then, from Chernoff's bound, a union bound over the vertices of \(V'\), and $\nu \ll 1$, with probability \(1-\exp(-\Omega(\delta n))
			\), the digraphs \(\ddn{\delta}{n}[\Vtr]\), \(\ddn{\delta}{n}[\Vsq]\) have minimum semidegree at least \(\delta n /3\), and \(\abs{\Vtr}, \abs{\Vsq} \ge n/3\).
		
			First reveal the random edges and colours of $D[\Vtr]$.
			
			Then, by \Cref{lemma:con}, 
			with probability \(1-\exp(-\Omega(\prbsqtr n))\),
			there is a collection 
			\(\Delta_v\) of \(\rho n\) rainbow triangles $\{v,v_1,v_2\}$, oriented as in the claim, that pairwise intersect only on \(v\), are pairwise colour-disjoint, and with \(V(\Delta_v) \subseteq \Vtr \cup \{v \} \).
			For $T \in \Delta_v$, denote its vertices by $v$, $v_1(T)$, $v_2(T)$ such that \(E(T) = \{vv_2(T), v_1(T) v, v_1(T) v_2(T)\}\).
			Let 
			$\cols_v$ be the collection of triples $\left(\cols(vv_2(T)), \cols(v_1(T)v_2(T)), \cols(v_1(T)v)\right)$ with $T \in \Delta_v$, whose three colours are in $\cols'$. 
			Then $\abs{\cols_v} \ge \rho n - 3\nu n \ge (\rho/2)n$.

			Next, reveal the colours of edges in $D[\Vsq]$.
			By setting \(\cols_0 = \cols_v\) in \Cref{lemma:K24}, it follows that, with probability \(1-\exp(-\Omega(\prbsqtr n))\),
			\(D[\Vsq]\) contains a subgraph \(K\) isomorphic to \(K_{2,4}\) with the orientation as in the claim, whose colours match those of a 
			triangle \(T \in \Delta_v\) as in the statement of the claim.

			We fail to find a \(T\) or \(K\) as required with probability at most \(\exp(-\Omega(\prbsqtr n))\).
		\end{proof}

		By \Cref{lemma:con}, with probability \(1-\exp(-\Omega(\prcon n))\), for any two vertices \(a,b\in V'\) 
		there are \(\pathb' n\) rainbow directed paths of length three from \(a\) to \(b\) which are pairwise colour disjoint and internally vertex disjoint.
		Hence, with probability
		\(1-\exp(-\Omega(\prcon n))\), this and the conclusion of \Cref{claim:sq-tr} hold simultaneously.

		Then, using $\nu \ll \pathb'$, there exist two colour- and vertex-disjoint rainbow directed paths $P_1, P_2$ of length $3$ such that: $P_1$ is directed from $v_2$ to $x$; $P_2$ is directed from \(w_1\) to \(w_2\); the interiors of $P_1$ and $P_2$ are in $V' \setminus (V(K) \cup V(T))$; and the colours of $P_1$ and $P_2$ are in $\cols' \setminus (\cols(K) \cup \cols(T))$.
		Then the graph $A_{v,c}$, defined as
		\[
			A_{v,c} = K\, \cup\, T \,\cup\, P_1\, \cup\, P_2,
		\]
		is a \((v,c)\)-absorber: 
		the 
		\((v,c)\)-absorbing path  
		and the \((v,c)\)-avoiding path are 
		\begin{equation*}
			v_1 v v_2 P_1 x z w_1 P_2 w_2 y u \quad \text{and} \quad
			v_1 v_2 P_1 x y w_1 P_2 w_2 z u 
		\end{equation*}
		 and it is straightforward to check they satisfy~\Cref{def:absorber} and have 13 vertices.

		The number of \(V' \subseteq V\) of size at least \((1-\gadgetb) n\) is at most
		\(n \binom{n}{\gadgetb n} = \exp(O\left(\gadgetb \log \gadgetb\right) n)\),
		and the same bound holds for the number of 
		\(\cols'\subseteq \cols\) of the same size.
		Using 
		\(\gadgetb \ll \prcon\),
		the probability we fail to find an absorber for some \(v,\, c,\, V',\, \cols'\), is by the union bound, at most
		\[
			n^2\e^{O\left(\gadgetb\log\gadgetb \right)n} \cdot \e^{-\Omega(\prcon n)}
			 = o(1).\qedhere
		\]
	\end{proof}

\section{Proof of Lemma~\ref{lemma:main}} \label{section:proof-main-lemma}

	In this section we prove \Cref{lemma:main}. The following lemma enables us to cover any small set of vertices and colours.
	\begin{lemma}[Flexible sets]\label{lemma:cover}
		Let \(1/n \ll 1/C \ll \coverub\ll \flxcnst \ll \delta \le 1\).
		Let $D_0$ be a digraph on \(n\) vertices with minimum semidegree at least \(\delta n\),
		and suppose that \(D \sim D_0\cup \dnp{n}{C/n}\) is uniformly coloured in \(\cols = [n]\).
		Then there exist \(\Vflx\subseteq V\),
		\(\colsflx \subseteq \cols\)
		of size \(2 \flxcnst n\) 
		such that with high probability 
		the following holds.
		For all distinct \(u,v \in V\), \(c\in \cols\),
		and \(\Vflx' \subseteq \Vflx\), \(\colsflx' \subseteq \colsflx\) of size at least \((2\flxcnst - \coverub)n\),
		there exists a rainbow directed path of length seven from $u$ to $v$, with internal vertices in \(\Vflx'\) and colours in \(\colsflx' \cup \{c\}\), that contains the colour \(c\).
	\end{lemma}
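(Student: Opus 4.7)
My plan is to take $\Vflx$ and $\colsflx$ to be independent uniform random subsets of $V$ and $\cols=[n]$ of size $2\mu n$, and to show that the flexibility property holds with high probability over the joint randomness of $D \sim D_0 \cup \dnp{n}{C/n}$, the uniform colouring, and $(\Vflx,\colsflx)$. The rainbow directed path to be constructed will always have the form
\[
	u \to w_1 \to w_2 \to y \to y' \to w_3 \to w_4 \to v,
\]
where $\{w_1,w_2,y,y',w_3,w_4\}\subseteq \Vflx'$, the edge $yy'$ has colour $c$, and the remaining six edges have pairwise distinct colours in $\colsflx'$.

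First I would record, via Chernoff's inequality and standard union bounds, three high-probability structural properties: (P1) every $w\in V$ satisfies $|N^+_{D_0}(w)\cap\Vflx|,|N^-_{D_0}(w)\cap \Vflx|\ge \delta\mu n$; (P2) for every $c\in\cols$, the digraph $D_0[\Vflx]$ contains at least $\delta\mu^2 n$ edges of colour $c$; and (P3) for every $c\in\cols$ and every $S\subseteq V$ with $|S|\le \zeta n$, at most $\delta\mu^2 n/4$ edges of $D_0$ incident to $S$ have colour $c$. Properties (P1) and (P2) are routine. For (P3) the expected count is at most $2\zeta n$, and the sharp Chernoff tail $(2e\zeta n/t)^{t}$ gives a per-pair failure probability of $\exp\!\big(-\Omega(\delta\mu^2 n\log(\delta\mu^2/\zeta))\big)$, which absorbs the $n\binom{n}{\zeta n}$ union bound whenever $\zeta\log(1/\zeta)\ll \delta\mu^2$---a consequence of the hierarchy $\zeta\ll \mu$.

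Next I would apply a direct adaptation of \Cref{lemma:con} to obtain rainbow length-$3$ side paths with restricted vertices and colours: with high probability, for every pair $u',v'\in V$ there is a collection of at least $\rho\mu n$ pairwise internally vertex-disjoint and pairwise colour-disjoint rainbow length-$3$ directed paths from $u'$ to $v'$ whose internal vertices lie in $\Vflx$ and whose edges are coloured in $\colsflx$. The adaptation follows the proof of \Cref{lemma:con} verbatim after taking the candidate neighbourhoods $N_{u'}, N_{v'}$ to be subsets of $\Vflx$ (using (P1) and independent-colouring Chernoff to guarantee size $\Omega(\mu^2 n)$) and restricting attention to random edges whose colour lies in $\colsflx$; this reduces the random part to $\dnp{n}{2\mu C/n}$ on the colour space $\colsflx$ of size $2\mu n$, and since $1/C\ll \mu$ the new parameters still satisfy the hierarchy required by \Cref{lemma:con}.

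Given $u,v,c,\Vflx',\colsflx'$, I then assemble the path as follows. By (P2) combined with (P3) applied to $S=\Vflx\setminus\Vflx'$, the set $E_c':=\{yy'\in E(D_0[\Vflx']):\cols(yy')=c\}$ has size at least $\delta\mu^2 n/2$; fix any $yy'\in E_c'$. From the collection of side paths from $u$ to $y$ guaranteed above, discard those whose internal vertex lies in $(\Vflx\setminus\Vflx')\cup\{u,v,y,y'\}$, those using a colour in $\colsflx\setminus\colsflx'$, and the at most one path using colour $c$: at least $\rho\mu n/2$ valid candidates $P_{uy}$ remain, and analogously $\ge \rho\mu n/2$ valid candidates $P_{y'v}$. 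Each candidate on one side conflicts with at most $2+3=5$ candidates on the other through a shared internal vertex or shared colour, so a compatible pair exists; concatenating $P_{uy}$, $yy'$, $P_{y'v}$ gives the required rainbow length-$7$ path. Since each failure event has probability $\exp(-\Omega(n))$ with implicit constant far exceeding $\mu\log 2$, the union bound over the $n^3\cdot 2^{O(\mu n)}$ choices of $(u,v,c,\Vflx',\colsflx')$ closes the argument. The main obstacle is (P3): bounding the number of colour-$c$ edges incident to an arbitrary deletion set $S$ strongly enough to beat the $\binom{n}{\zeta n}$ union bound forces a sharp Chernoff tail and the parameter separation $\zeta\log(1/\zeta)\ll \delta\mu^2$, which is precisely why the hierarchy requires $\zeta$ to be much smaller than a suitable power of $\mu$.
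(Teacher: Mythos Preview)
Your approach is correct and follows the same skeleton as the paper's proof: choose $\Vflx,\colsflx$ randomly, and build the length-$7$ path as two rainbow length-$3$ connectors glued via a single colour-$c$ edge. The difference lies in the order of operations and how robustness to the deletion of $\zeta n$ vertices/colours is obtained. The paper first finds, in the whole coloured digraph, a colour-$c$ \emph{matching} $M_c$ (via \Cref{lemma:linear-matching}) and, for every pair, a family $\calP_{u,v}$ of pairwise vertex- and colour-disjoint rainbow length-$3$ paths (via \Cref{lemma:con}); only then does it sample $V',\cols'$ and use Chernoff to show that $\Omega(\mu^2\gamma n)$ edges of $M_c$ and $\Omega(\mu^5\gamma n)$ paths of $\calP_{u,v}$ land inside. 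Because these structures are disjoint, any deletion of $\zeta n$ vertices or colours destroys at most $O(\zeta n)$ of them, which is a deterministic statement requiring no union bound over $\Vflx',\colsflx'$.

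Your route reverses the order: you pick $\Vflx,\colsflx$ first, then work inside them. This forces you to re-prove \Cref{lemma:con} with the vertex/colour restrictions built in, and, more importantly, to introduce (P3) and a union bound over all $S$ of size $\le\zeta n$ to control the colour-$c$ edges lost to deletion. That argument is valid (your sharp-tail computation with the hierarchy $\zeta\ll\mu$ goes through), but it is exactly what the paper avoids by insisting on a matching rather than an edge count. Two small remarks: once (P1)--(P3) and the adapted \Cref{lemma:con} are established, the construction works deterministically for every $(u,v,c,\Vflx',\colsflx')$, so your closing union bound over $n^3\cdot 2^{O(\mu n)}$ choices is unnecessary; and for (P2) the concentration is cleanest if you condition on $\Vflx$ (and use (P1) to get $e(D_0[\Vflx])\ge 2\delta\mu^2 n^2$) before revealing the colouring.
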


	\begin{proof}
		Let $\gamma$ satisfy $\coverub \ll \gamma \ll \flxcnst$.
		
		For a colour $c$, let $M_c$ be a largest oriented matching of colour $c$ in $D$, and for distinct vertices $u,v$, let $\calP_{u,v}$ be a largest collection of pairwise vertex- and colour-disjoint rainbow directed paths of length three with endpoints $u,v$.
		By \Cref{lemma:linear-matching,lemma:con}, with probability at least $1 - \exp(-\gamma n)$, we have $|M_c| \ge \gamma n$ and $|\calP_{u,v}| \ge \gamma n$ for every colour $c$ and distinct vertices $u,v$.
		
		\def \Vrand {V'}
		\def \Crand {\cols'}
		Let $\Vrand$ be a random subset of $V$, obtained by including each vertex independently with probability $\flxcnst$, and let $\Crand$ be a random subset of $\cols$, obtained by including each colour independently with probability $\mu$.
		
		Then, by Chernoff and union bounds, with high probability,
		the following properties hold.
		\begin{itemize}
			\item 
				$|\Vrand|, |\Crand| \le 2\mu n$,
			\item
				at least $\mu^2 \gamma n/2$ edges in $M_c$ have both endpoints in $\Vrand$, for every $c \in \cols$,
			\item
				at least $\mu^5 \gamma n/2$ paths in $\calP_{u,v}$ have their interior vertices in $\Vrand$ and all colours in $\Crand$, for all distinct $u,v \in V$.
		\end{itemize}
		
		Suppose that all three properties hold, and let $\Vflex$ be a subset of $V$ that contains $\Vrand$ and has size $2\flxcnst n$ and let $\Cflex$ be a subset of $\cols$ that contains $\Crand$ and has size $2\flxcnst n$. 
		
		We show that these sets satisfy the requirements of the lemma. Indeed, fix $u,v, c$ and $\Vflex',\Cflex'$ as in the lemma.
		Then, as $\coverub \ll \mu, \gamma$, there is an edge $e = xy \in M_c$ with both ends in $\Vflex'$.
		Similarly, there are paths $P_1 \in \calP_{u,x}$ and $P_2 \in \calP_{y,v}$ that are vertex- and colour-disjoint, their interiors are in $\Vflex'$, and their colours are in $\Cflex' \setminus \{c\}$. Then $P_1 \cup e \cup P_2$ is a path that satisfies the requirements of the lemma.
	\end{proof}

	We will put together several \( (v,c) \)-absorbers to construct the digraph $\Habs$ in Lemma~\ref{lemma:main}, by having a \((v,c)\)-absorber for each edge of a bipartite graph which has the following property.
	This follows an idea introduced by Montgomery~\cite{montgomery}, which was adapted to the rainbow setting by Gould, Kelly, K\"uhn and Osthus~\cite{kuhn-osthus}.

	\begin{definition}[Definition 3.3,~\cite{kuhn-osthus}] \label{def:robustly-matchable}
		Let $H$ be a balanced bipartite graph with bipartition $(A,B)$.
		We say $H$ is \emph{robustly matchable} with respect to $A\prm,\, B\prm$, for some \(A\prm \subseteq A\) and
		\(B\prm \subseteq B\) of equal size,
		if for every pair of sets $X\subseteq A\prm, Y\subseteq B\prm$ with $\abs{X} = \abs{Y} \leq  \abs{A\prm}/2,$
		there is a perfect matching in $H[A\setminus X, B\setminus Y]$.
		We call \(A', B'\) the \emph{flexible sets} of \(H\).
	\end{definition}

	\begin{proposition}[Lemma 4.5, \cite{kuhn-osthus}] \label{prop:rmbg}
	For every large enough $m\in \nats$, there exists a $256$-regular bipartite graph with bipartition $(A,B)$ and $\abs{A} = \abs{B} = 7m$,
	which is robustly matchable with respect to some $A\prm \subseteq A, B\prm \subseteq B$ with $\abs{A\prm} = \abs{B\prm} = 2m$.
	\end{proposition}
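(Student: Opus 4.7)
The plan is to apply the probabilistic method. I would let $H$ be a uniformly random $256$-regular bipartite graph on parts $A,B$ with $\abs{A}=\abs{B}=7m$---constructed, for instance, as the union of $256$ independent uniformly random perfect matchings between $A$ and $B$, discarding the (negligible for large $m$) event that multi-edges appear. Designate arbitrary subsets $A'\subseteq A$ and $B'\subseteq B$ of size $2m$ as the flexible sets. It suffices to show that with positive probability $H$ is robustly matchable with respect to $A',B'$.

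By Hall's theorem, verifying robust matchability amounts to showing that for every $X\subseteq A'$, $Y\subseteq B'$ with $\abs{X}=\abs{Y}=k\le m$ and every $S\subseteq A\setminus X$, we have $\abs{N_H(S)\cap (B\setminus Y)}\ge \abs{S}$ (the analogous condition with $A$ and $B$ swapped is equivalent since $\abs{A\setminus X}=\abs{B\setminus Y}$). The crux is an expansion statement that I claim holds with positive probability: every $S\subseteq A$ with $m\le\abs{S}\le 6m$ (and symmetrically every $S\subseteq B$) satisfies $\abs{N_H(S)}\ge \abs{S}+m$, while every $S\subseteq A$ with $\abs{S}<m$ satisfies $\abs{N_H(S)\setminus B'}\ge \abs{S}$. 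Granting this, Hall's condition is immediate: for $\abs{S}\ge m$, since $\abs{Y}\le m$, we have $\abs{N_H(S)\cap (B\setminus Y)}\ge \abs{N_H(S)}-\abs{Y}\ge \abs{S}$; for $\abs{S}<m$, since $Y\subseteq B'$, we have $\abs{N_H(S)\cap (B\setminus Y)}\ge \abs{N_H(S)\setminus B'}\ge \abs{S}$; and the range $\abs{S}>6m$ is reduced to the smaller ranges by the dual expansion on the $B$-side.

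To establish the main expansion, I would use a first-moment argument. For fixed $s\in [m,6m]$ and $t=s+m-1$, the expected number of $S\subseteq A$ of size $s$ with $\abs{N_H(S)}\le t$ is bounded by $\binom{7m}{s}\binom{7m}{t}\cdot \prob{N_H(S_0)\subseteq T_0}$ for one fixed pair $S_0,T_0$ of the respective sizes; in the matching model the latter probability is at most $(t/7m)^{256 s}$ up to lower-order factors, and since $t/7m$ is bounded away from $1$ throughout the range, choosing the regularity $256$ large enough beats the entropy term and makes the sum over $s$ vanish. The main obstacle is the range $\abs{S}<m$, where $\abs{N_H(S)}\ge\abs{S}+m$ cannot hold in a $256$-regular graph; there I would verify only the auxiliary condition $\abs{N_H(S)\setminus B'}\ge\abs{S}$. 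This holds easily because the neighbours of each small $S$ are nearly uniformly distributed in $B$ and $\abs{B\setminus B'}/\abs{B}=5/7$, so a Chernoff and union-bound argument gives $\abs{N_H(S)\setminus B'}\ge (5/7)\cdot 256\abs{S}\cdot (1-o(1))$, which is much larger than $\abs{S}$ with overwhelming probability. The calibration of the constants---most notably the choice of $256$ as the regularity, and the decomposition into $(2m,5m)$-sized pieces---should then be just sufficient to close both ranges simultaneously.
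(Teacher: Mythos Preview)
The paper does not prove this proposition; it simply quotes it as Lemma~4.5 of \cite{kuhn-osthus}. So there is no ``paper's own proof'' to compare against, but your proposal must still stand on its own, and it has a genuine gap.

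The problem is the small-range step. You fix $A',B'$ \emph{before} sampling $H$ and then claim that, for every $S\subseteq A$ with $\abs{S}<m$, a Chernoff-plus-union-bound gives $\abs{N_H(S)\setminus B'}\ge\abs{S}$. Consider $\abs{S}=1$. For a single vertex $v$, in the union-of-matchings model each of its $256$ neighbours lands in $B'$ independently with probability $2/7$, so $\prob{N_H(v)\subseteq B'}=(2/7)^{256}$, a fixed positive constant. The expected number of vertices $v\in A\setminus A'$ with $N_H(v)\subseteq B'$ is therefore $5m\cdot(2/7)^{256}$, which tends to infinity with $m$; a second-moment computation (the relevant indicators are pairwise negatively correlated) shows that with high probability such a vertex exists once $m$ is large. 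But any such $v\in A\setminus A'$ immediately destroys robust matchability: taking $Y\subseteq B'$ with $N_H(v)\subseteq Y$ and $\abs{Y}\le 256\le m$, and any $X\subseteq A'$ with $\abs{X}=\abs{Y}$, the set $\{v\}\subseteq A\setminus X$ has $N_H(v)\cap(B\setminus Y)=\emptyset$, so Hall fails. Your Chernoff bound here only gives a tail of $\exp(-\Theta(1))$, which cannot beat a union over $\Theta(m)$ vertices.

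There is a secondary issue with your ``main expansion'' as stated: for $\abs{S}=6m$ the inequality $\abs{N_H(S)}\ge \abs{S}+m$ would force $N_H(S)=B$, which is impossible in \emph{any} $256$-regular graph once $m>256$ (pick any $b\in B$ and take $S\subseteq A\setminus N_H(b)$ of size $6m$). This is reparable---by the usual Hall duality you only need the expansion for $\abs{S}\le \lceil\abs{A\setminus X}/2\rceil\le 4m$, and there the first-moment calculation does go through---but the small-set obstruction above is not. The constructions in \cite{kuhn-osthus} (following Montgomery) avoid it by building $H$ with more structure, placing separate sparse random bipartite graphs between carefully chosen sub-parts so that, by design, every vertex of $A\setminus A'$ has many neighbours in $B\setminus B'$ (and symmetrically). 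A uniformly random $256$-regular graph with arbitrarily designated flexible sets does not have this property.
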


	\begin{proof}[Proof of \Cref{lemma:main}]
		Let $\coverub, \flxcnst, \gadgetb$ satisfy
        \(\eta \ll \coverub\ll \flxcnst\ll \gadgetb \ll \gamma\).

		By the union bound, the conclusions of \Cref{lemma:con,lemma:cover,lemma:absorbers-exist} hold simultaneously with high probability.
		Let \(\Vflx,\) \(\colsflx\) be sets as given by \Cref{lemma:cover}; so $|\Vflx| = |\colsflx| = 2\flxcnst n$.
		
		Let  \(\Vbuf,\, \colsbuf\) be arbitrary subsets of \(V\setminus \Vflx,\, \cols\setminus \colsflx\) of size \(5 \flxcnst n\).
		Let \(H\) be a bipartite graph on 
		\((\Vflx \cup \Vbuf,\, \colsflx \cup \colsbuf)\)
		that is isomorphic to the graph in \Cref{prop:rmbg} such that \(\Vflx,\, \colsflx\) are the flexible sets.

		\begin{claim} \label{claim:absorbing-template}
			There is collection of absorbers $A_{v,c}$ on $13$ vertices each and rainbow directed paths $P_{v,c}$ of length three each, for every edge $vc$ in $H$, with the following properties:
			the internal vertices of $A_{v,c}$ and of $P_{v,c}$ are pairwise disjoint and disjoint of $\Vflx \cup \Vbuf$; 
			the internal colours of $A_{v,c}$ and the colours of $P_{v,c}$ are pairwise disjoint and disjoint of $\colsflx \cup \colsbuf$; and for some ordering of the edges of $H$, the path $P_{v,c}$ starts with the last vertex of $A_{v',c'}$ and ends with the first vertex of $A_{v,c}$, where $v'c'$ is the predecessor of $vc$ in the ordering (with $P_{vc} = \emptyset$ the trivial path for the first edge $vc$).
		\end{claim}
		
		\begin{proof}
			Let \(H_0\) be a maximal subgraph of 
			\(H\) with some ordering of its edges, for which we can find a collection of absorbers and paths as in the claim.
			Suppose for contradiction \(H_0 \neq H\) and let
			\(v_1 c_1 \in E(H\setminus H_0)\) and \(v_0 c_0\) be the last edge of \(H_0\) in the ordering that has absorber \(A_{v_0, c_0}\).

			Let \(V_0, \cols_0\) be the union of the vertices and colours spanned by the absorbers for \(E(H_0)\), the paths connecting them, and
			\(\Vflx \cup \Vbuf\), 
			\(\colsflx \cup \colsbuf\).
			Then, since each absorber has $13$ vertices 
			and each path connecting consecutive absorbers has four vertices, we have
			\[
			\abs{V_0}, \abs{\cols_0} = O(e(H_0)) = O(\flxcnst n) < \gadgetb n/2.
			\]
			Hence, by the assumption that \Cref{lemma:absorbers-exist} holds, there exists a \((v_1, c_1)\)-absorber
			\(A_{v_1,c_1}\) on 13 vertices
			with internal vertices and internal colours disjoint from 
			\(V_0\) and \(\cols_0\).
			Moreover, 
			from \Cref{lemma:con} there exists a rainbow directed path $P_{v_1, c_1}$ of length three
			between the last vertex of \(A_{v_0, c_0}\) and the first vertex of \(A_{v_1,c_1}\), with internal vertices disjoint of 
			\(V_0 \cup V(A_{v_1, c_1})\) and colours disjoint of \(\cols_0 \cup \cols(A_{v_1, c_1})\).
			Then the subgraph of \(H\) with edges 
			\(E(H_0) \cup \{ v_1 c_1 \}\) satisfies the conditions of the claim and properly contains 
			\(H_0\), contradicting the maximality of \(H_0\).
		\end{proof}

		Set
		\[
			\Habs:= \bigcup_{vc \in E(H)} (A_{v,c} \cup P_{v,c})\, ,
		\]
		where $A_{v,c}$ and $P_{v,c}$ are as in \Cref{claim:absorbing-template}, with $P_{v,c}=\emptyset$ for the first \(vc\),
		
		Let $w$ be the first vertex in the first absorber in $\Habs$ with the ordering given by \Cref{claim:absorbing-template}, and let $w'$ be the last vertex of the last absorber.
		We will now show how to construct a directed path \(Q\) from $x$ to $y$ as in the statement of the lemma, given $V' \subseteq V \setminus V(\Habs)$ and $\cols' \subseteq\cols \setminus \cols(\Habs)$ of size between $2$ and $\eta n$.
		Let \(c_0 \in \cols'\).
		From \Cref{lemma:cover}, there exists a rainbow directed path \(Q_1\) from $x$ to $w$, with internal vertices in \(\Vflx\) and colours in \(\colsflx \cup \{c_0\}\), which includes the colour $c_0$ and has length 7.

		\begin{claim}
			There exists a rainbow directed path \(Q_2\) from $w'$ to $y$,  with internal vertices 
			\(\Vflx\dprm \cup (V'\setminus \{x\})\),
			and colours \(\colsflx\dprm \cup (\cols'\setminus \{c_0\})\),
			for some \(\Vflx\dprm \subseteq \Vflx \setminus V(Q_1)\),
			\(\colsflx\dprm \subseteq \colsflx \setminus \cols(Q_1)\) 
			with
			\(\abs{\Vflx\dprm} = \abs{\colsflx\dprm} \le \flxcnst n - 7\).
		\end{claim}
		\begin{proof}
			The Claim will follow by applying greedily \Cref{lemma:cover}. 

			Let $v_0, \ldots, v_k$ be an enumeration of the vertices in $(V' \cup \{w'\}) \setminus \{x\}$, with \(v_0=w'\) and $v_k = y$, and let $c_1, \ldots, c_k$ be an enumeration of $\cols' \setminus \{c_0\}$.
			We claim that there exist directed paths $P_1, \ldots, P_k$ as follows: $P_i$ is a rainbow directed path from $v_{i-1}$ to $v_i$ of length 7; the interiors of the paths $P_i$ are pairwise vertex-disjoint, contained in \(\Vflx\), and disjoint of $V' \cup \{w'\}$; $P_i$ contains an edge coloured $c_i$, and all other colours are in \(\Cflex\); and no colour appears on more than one path $P_i$.

			To see this, suppose that $P_1, \ldots, P_{i-1}$ are defined. 
			Let \(\Vflx' = \Vflx \setminus (V(P_1) \cup \ldots \cup V(P_{i-1}) \cup V(Q_1))\),
			\(\colsflx' = \colsflx \setminus (\cols(P_1) \cup \ldots \cup \cols(P_{i-1}) \cup \cols(Q_1))\).
			Then $|\Vflx'| \ge 2\flxcnst n - 7k \ge (2\mu - \coverub)n$, using that $k \le \eta n$, $|V(Q_1)|, |V(P_j)| = 8$, and $\eta \ll \coverub$, and similarly $|\colsflx'| \ge (2\mu - \coverub)n$.
			Hence, by \Cref{lemma:cover} there is a rainbow directed path \(P_i\) from 
			\(v_{i-1}\) to \(v_i\) of length 7, that contains an edge with colour \(c_i\), 
			and whose internal vertices and other colours are in \(\Vflx'\) and \(\colsflx'\), as required.

			Write $Q_2 = P_1 \ldots P_k$.
			Let \(\Vflx\dprm = V(Q_2) \cap \Vflx\),
			\(\colsflx \dprm = \cols(Q_2) \cap \colsflx\).
			Then
			\(
			\abs{\Vflx\dprm} = \abs{\colsflx \dprm} < \abs{Q_2} \le \coverub n < \flxcnst n -7,
			\)
			as required.
		\end{proof}

		Let $\Vflx''' = (V(Q_1) \cup V(Q_2)) \cap \Vflx$ and $\colsflx''' = (\cols(Q_1) \cup \cols(Q_2)) \cap \colsflx$. 
		Then
		we have 
		$\abs{\Vflx'''} = \abs{\colsflx'''} \le \flxcnst n$.
		Hence by choice of $H$ there is a matching $M'$ between $\Vflx \setminus \Vflx'''$ and $\colsflx \setminus \colsflx'''$. 
		
		For \(vc\in E(H)\)
		let \(Q_{M'}(vc)\) be the \((v,c)\)-absorbing path of \(A_{v,c}\) if \(vc\in E(M')\) and the avoiding path otherwise.
		Let

		\[
			\Qabs = \bigcup_{vc\in E(H)} (Q_{M'}(vc)\ \cup P_{v,c}).
		\]
		Then \(\Qabs\) is a rainbow directed path that is spanning in 
		\(V(\Habs)\setminus \Vflx'''\)
		and 
		\(\cols(\Habs) \setminus \colsflx'''\)
		with first vertex \(w\) and last vertex \(w'\), with every colour used once.
		Therefore 
		\(Q = Q_1 \cup \Qabs \cup Q_2\) is a rainbow path, spanning in \(V(\Habs) \cup V'\) and \(\cols(\Habs) \cup \cols'\) with first vertex \(x\) and last vertex \(y\).
	\end{proof}

\section{Conclusion}
\label{sec:conclusion}
In this paper we considered the problem of containing a rainbow directed Hamilton cycle in uniformly coloured perturbed digraphs, when the number of colours is \(n\) (optimal) and the probability is \(\Theta(n^{-1})\) (optimal up to a constant factor).
A natural open problem to consider next is whether, under the same assumptions, it is possible to guarantee with high probability a rainbow copy of every fixed orientation of a Hamilton cycle, or even to prove a \emph{universality} result, that is showing that we can find a rainbow copy of every orientation simultaneously.
In the uncoloured setting such a universality result was recently obtained by Araujo, Balogh, Krueger, Piga and Treglown~\cite{simon-all-orientations}, for cycles of arbitrary lengths (not just Hamilton cycles). 
They proved that for every $\delta \in (0,1)$ there exists $C>0$, such that for every fixed orientation $D$ of a cycle of length between $3$ and $n$, with probability at least \(1-\e^{-n}\), the perturbed digraph \(D_0 \cup \dnp{n}{C/n}\)  (where as usual \(\delta n \) is a lower bound on the semidegree of \(D_0\)) contains $D$.
Taking the union bound over the at most \(n2^n\) choices for $D$ gives the universality result.
This greatly generalises the original result of Bohman, Frieze and Martin~\cite{bohman} where, under the same assumptions, they showed that with high probability the perturbed digraph has a directed Hamilton cycle.
It is plausible that our methods in combination with those of~\cite{simon-all-orientations} could extend the result of~\cite{simon-all-orientations} to the rainbow setting, i.e.\ that we can find with probability \(1-\e^{-n}\) a rainbow copy of a fixed oriented cycle of length \(\ell \in [2,n]\) when colouring uniformly with \(n\) colours.

	\bibliographystyle{amsplain} 

	\bibliography{main}   

\end{document}